\DeclareMathAlphabet{\mathpzc}{OT1}{pzc}{m}{it}
\newcounter{todocounter}
\newcommand{\todonum}[1]{\stepcounter{todocounter}\todo{\thetodocounter: #1}}
\providecommand\@dotsep{5}
\renewcommand{\listoftodos}[1][\@todonotes@todolistname]{%
	\@starttoc{tdo}{#1}}
\crefname{table}{table}{tables}
\crefname{listing}{Program-code}{Program-codes}  
\Crefname{listing}{Program-code}{Program-codes}
\crefname{subsection}{subsection}{subsections}
\theoremstyle{plain}
\newtheorem{Thm}{Theorem}[section]
\newtheorem{Cor}[Thm]{Corollary}
\newtheorem{Prop}[Thm]{Proposition}
\newtheorem{Lem}[Thm]{Lemma}
\theoremstyle{definition}
\newtheorem{Remark}[Thm]{Remark}
\numberwithin{equation}{section}
\newcommand{\Card}[1]{\left\vert #1\right\vert} 
\newcommand{\coset}[1]{\left[ #1 \right]}  
\newcommand{\gen}[1]{\left\langle #1 \right\rangle}  
\newcommand{\FNorm}[1]{\left\vert #1 \right\vert} 
\newcommand{\Ind}{\operatorname{Ind}}
\newcommand{\Hom}{\operatorname{Hom}}
\newcommand{\PGL}{\operatorname{PGL}}
\newcommand{\SL}{\operatorname{SL}}
\newcommand{\GL}{\operatorname{GL}}
\newcommand{\C}{\mathbb C}
\newcommand{\Q}{\mathbb{Q}}
\newcommand{\Z}{\mathbb{Z}}
\newcommand{\R}{\mathbb{R}}
\newcommand{\N}{\mathbb{N}}
\newcommand{\bk}[1]{\left(#1\right)} 
\newcommand{\bm}{\begin{multline*}}
\newcommand{\tu}{\end  {multline*}}
\DeclareMathOperator{\Id}{\mathbf{1}} 
\newcommand{\Gm}{\mathbb{G}_m} 
\renewcommand{\check}[1]{#1 ^{\vee}} 
\DeclareMathOperator{\Real}{\mathfrak{Re}} 
\DeclareMathOperator{\Imaginary}{\mathfrak{Im}} 
\newcommand{\piece}[1]{\left\{\begin{matrix} #1 \end{matrix}\right.} 
\newcommand{\set}[1]{\left\{ #1 \right\}} 
\newcommand{\mvert}{\mathrel{}\middle\vert\mathrel{}} 
\newcommand{\suml}{\sum\limits}
\newcommand{\rmod}{/}
\newcommand{\lmod}{\backslash}
\newcommand{\Stab}{\operatorname{Stab}}
\newcommand{\esixchar}[6]{\renewcommand*{\arraystretch}{1} \begin{pmatrix}&& #2 && \\ #1 & #3 & #4 & #5 & #6 \end{pmatrix} }
\newcommand{\esixcharclass}[6]{\renewcommand*{\arraystretch}{1} \begin{bmatrix}&& #2 && \\ #1 & #3 & #4 & #5 & #6 \end{bmatrix} }
\newcommand{\bfM}{\mathbf{M}}
\newcommand{\bfT}{\mathbf{T}}
\newcommand{\bfX}{\mathbf{X}}
\newcommand{\sepline}{\mbox{}\linebreak\noindent\rule{\textwidth}{2pt}\linebreak}
\newcommand*{\rom}[1]{\expandafter\@slowromancap\romannumeral #1@}
\renewcommand*{\arraystretch}{1.5}
\def\imod#1{\allowbreak\mkern10mu({\operator@font mod}\,\,#1)}
\renewcommand\section{\@startsection{section}{1}{\z@}%
	{-3.5ex \@plus -1ex \@minus-.2ex}%
	{2.3ex \@plus.2ex}%
	{\center\normalfont\large\bfseries}}
\renewcommand\subsection{\@startsection{subsection}{2}{\z@}%
	{-3.5ex \@plus -1ex \@minus-.2ex}%
	{2.3ex \@plus.2ex}%
	{\normalfont\large\bfseries}}
\renewcommand\subsubsection{\@startsection{subsubsection}{3}{\z@}%
	{-3.5ex \@plus -1ex \@minus-.2ex}%
	{2.3ex \@plus.2ex}%
	{\normalfont\large\bfseries}}
\newtheorem*{rep@theorem}{\rep@title} \newcommand{\newreptheorem}[2]{%
	\newenvironment{rep#1}[1]{%
		\def\rep@title{\bf #2 \ref{##1} }%
		\begin{rep@theorem} }%
		{\end{rep@theorem} } }
\protected\def\ignorethis#1\endignorethis{}
\let\endignorethis\relax
\newcounter{NoTableEntry}
\renewcommand*{\theNoTableEntry}{NTE-\the\value{NoTableEntry}}
\newcommand*{\notableentry}{%
	\kern-\tabcolsep
	\stepcounter{NoTableEntry}%
	\vadjust pre{\zsavepos{\theNoTableEntry t}}
	\vadjust{\zsavepos{\theNoTableEntry b}}
	\zsavepos{\theNoTableEntry l}
	\raisebox{%
		\dimexpr\zposy{\theNoTableEntry b}sp
		-\zposy{\theNoTableEntry l}sp\relax
	}[0pt][0pt]{%
		\setlength{\unitlength}{1pt}%
		\edef\w{%
			\strip@pt\dimexpr\zposx{\theNoTableEntry r}sp%
			-\zposx{\theNoTableEntry l}sp\relax
		}%
		\edef\h{%
			\strip@pt\dimexpr\zposy{\theNoTableEntry t}sp%
			-\zposy{\theNoTableEntry b}sp\relax
		}%
		\ifdim\w pt=0pt 
		\else
		\begin{picture}(0,0)%
		\edef\x{%
			\noexpand\put(0,0){\noexpand\line(\w,\h){\w}}%
			\noexpand\put(0,\h){\noexpand\line(\w,-\h){\w}}%
		}\x
		\end{picture}%
		\fi
	}%
	\hspace{0pt plus 1filll}%
	\zsavepos{\theNoTableEntry r}
	\kern-\tabcolsep
}
\providecommand*{\cupdot}{%
	\mathbin{%
		\mathpalette\@cupdot{}%
	}%
}
\newcommand*{\@cupdot}[2]{%
	\ooalign{%
		$\m@th#1\cup$\cr
		\sbox0{$#1\cup$}%
		\dimen@=\ht0 %
		\sbox0{$\m@th#1\cdot$}%
		\advance\dimen@ by -\ht0 %
		\dimen@=.5\dimen@
		\hidewidth\raise\dimen@\box0\hidewidth
	}%
}
\providecommand*{\bigcupdot}{%
	\mathop{%
		\vphantom{\bigcup}%
		\mathpalette\@bigcupdot{}%
	}%
}
\newcommand*{\@bigcupdot}[2]{%
	\ooalign{%
		$\m@th#1\bigcup$\cr
		\sbox0{$#1\bigcup$}%
		\dimen@=\ht0 %
		\advance\dimen@ by -\dp0 %
		\sbox0{\scalebox{2}{$\m@th#1\cdot$}}%
		\advance\dimen@ by -\ht0 %
		\dimen@=.5\dimen@
		\hidewidth\raise\dimen@\box0\hidewidth
	}%
}
\title[Degenerate Principal Series of $E_6$]{The Structure of Degenerate Principal Series Representations of Exceptional Groups of Type $E_6$ over $p$-adic Fields} 
\author[Hezi Halawi and Avner Segal]{Hezi Halawi${^{1}}$ and Avner Segal${^{2,3}}$}
\address{${^1}$ School of Mathematics, Ben Gurion University of the Negev, POB 653, Be'er Sheva 84105, Israel}
\address{${^2}$ Department of Mathematics, University of British Columbia, Vancouver, BC, V6T 1Z2, Canada}
\address{${^3}$ Department of Mathematics, Bar Ilan University, Ramat Gan 5290002, Israel}
\email{halawi@math.bgu.ac.il, avners@math.ubc.ca, segalav4@biu.ac.il}
\numberwithin{equation}{section}
\subjclass[2010]{22E50, 20G41, 20G05}
\begin{document}

\maketitle

\begin{abstract}
	In this paper, we study the reducibility of degenerate principal series of the simple, simply-connected exceptional group of type $E_6$.
	Furthermore, we calculate the maximal semi-simple subrepresentation and quotient of these representations.
\end{abstract}

\tableofcontents

\section{Introduction}

\todonum{Grammar and spell check. Acknowledgments.}

One of the main problems in the representation theory of $p$-adic groups is the question of reducibility and structure of parabolic induction.
More precisely, let $G$ be a $p$-adic group and let $P$ be a parabolic subgroup of $G$.
Let $M$ denote the Levi subgroup of $P$ and let $\sigma$ be a smooth irreducible representation of $M$.
One can ask:
\begin{itemize}
	\item Is the parabolic induction $\pi=\Ind_P^G\sigma$ reducible?
	\item If the answer to the previous question is positive, what is Jordan-H\"older series of $\pi$?
\end{itemize}


In this paper we completely answer the first question for degenerate principal series of the exceptional group of type $E_6$.
We further calculate the maximal semi-simple subrepresentation and quotient of such representations which partially answers the second question.

More precisely, let $G$ be a simple, split, simply-connected $p$-adic group of type $E_6$ and let $P$ be a maximal parabolic subgroup of $G$.
For a one-dimensional representation $\Omega$ of $P$, we consider the normalized parabolic induction $\pi=\Ind_P^G\Omega$.
We determine for which $\Omega$, $\pi$ is reducible and its maximal semi-simple subrepresentation and maximal semi-simple quotient.
Our main result, \Cref{Thm:Deg_PS_of_E6}, is summarized by the following corollary.

\begin{Cor}
	With the exception of one case, for any maximal Levi subgroup $M$ of $G$ (of type $E_6$) and any one-dimensional representation $\Omega$ of $M$, if the degenerate principal series representation $\pi=\Ind_P^G\Omega$ is reducible, then $\pi$ admits a unique irreducible subrepresentation and a unique irreducible quotient.

	In the one exception, up to contragredience, $\Omega=\bk{\chi\circ\omega_4}\FNorm{\omega_4}^{1/2}$, where $\omega_4$ is the $4^{th}$ fundamental weight and $\chi$ is a cubic character.
	In this case, $\pi$ admits a maximal semi-simple quotient of length $3$ and a unique irreducible subrepresentation.
\end{Cor}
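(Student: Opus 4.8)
The plan is to reduce the Corollary to a case-by-case determination of the socle and cosocle of $\pi$ carried out in \Cref{Thm:Deg_PS_of_E6}, so I first organize the families of inducing data. The diagram automorphism $\theta$ of $E_6$ interchanges the simple roots $1\leftrightarrow 6$ and $3\leftrightarrow 5$ while fixing $2$ and $4$; since $\theta$ is induced by an automorphism of $G$, it suffices to analyze the four maximal parabolics attached to nodes $1,2,3,4$. The one-dimensional representations of the corresponding Levi $M$ form a family $\Omega_{s,\chi}=\bk{\chi\circ\omega_i}\FNorm{\omega_i}^{s}$, where $\omega_i$ is the fundamental weight defining the parabolic, $s\in\C$, and $\chi$ ranges over the finite-order characters of $F^\times$. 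Passing to the contragredient sends $\Omega_{s,\chi}$ to $\Omega_{-s,\chi^{-1}}$ (up to $\theta$), so I may further assume $\Real(s)\ge 0$; this is the sense of the ``up to contragredience'' clause.

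For each family I would first locate the reducibility points and then determine the two ends of the composition series. Reducibility is controlled by the standard long intertwining operator $M_w(s)\colon \Ind_P^G\Omega_{s,\chi}\to \Ind_{P'}^G\Omega'_{s,\chi}$, where $w$ carries $P$ to an associate parabolic $P'$; factoring $M_w(s)$ into rank-one operators and invoking the Gindikin--Karpelevich formula identifies its poles and zeros, and hence the points at which $\pi$ is reducible. At such a point the image of $M_w(s)$ is a proper subrepresentation, so by Frobenius reciprocity the socle and the cosocle of $\pi$ are governed by the spaces $\Hom_G(\tau,\pi)$ and $\Hom_G(\pi,\tau)$, which are in turn read off from the Jacquet module $r_P(\pi)$. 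I would compute $r_M\bk{\Ind_P^G\Omega_{s,\chi}}$ via the geometric lemma as a sum over the double cosets $W_M\backslash W/W_M$; the crucial observation is that $\pi$ having a \emph{unique} irreducible submodule (respectively quotient) is equivalent to the relevant character appearing with multiplicity one in the appropriate graded piece of this Jacquet module. Verifying this multiplicity-one property throughout the generic families is the bulk of the (lengthy but routine) computation.

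The main obstacle is the single exceptional family attached to node $4$. Deleting node $4$ leaves a Levi whose derived group has type $A_2\times A_1\times A_2$, and the order-three centres of the two $A_2$-factors conspire with a cubic character $\chi$ at $s=1/2$ to produce a genuinely degenerate configuration in which the multiplicity-one property fails and three distinct irreducible constituents surface in the cosocle. The delicate point is to prove that the maximal semisimple quotient has length \emph{exactly} $3$ and that the socle nonetheless remains irreducible; this cannot be settled by the uniform argument of the generic families and instead demands an explicit analysis of the composition factors and of the rank-one intertwining operators at the degenerate point, together with careful bookkeeping of the $\mu_3$-symmetry. Once this case is resolved, the Corollary follows by assembling the socle and cosocle data across all four families.
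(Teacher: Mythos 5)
Your overall reduction (use the diagram automorphism to restrict to nodes $1$--$4$, use contragredience to fix the sign of $s$, then read the socle and cosocle off a case-by-case analysis) matches the shape of the paper's argument, but two of your key steps do not survive contact with the actual cases. First, the criterion you call ``the crucial observation'' --- that $\pi$ has a unique irreducible submodule \emph{if and only if} the leading exponent $\chi_{M,s}$ appears with multiplicity one in the Jacquet module --- is only an implication, not an equivalence: the paper's \Cref{Lem:Bound_on_length_of_ss_subrepn} gives $\#\{\text{irreducible subreps}\}\le mult_\pi\bk{\chi_{M,s}}$, and nothing more. Your plan to ``verify this multiplicity-one property throughout the generic families'' fails outright, because for most of the reducible non-regular points (e.g.\ $\coset{2,-\tfrac52,1}$, all three reducible points of $P_3$, and essentially every reducible point of $P_4$) the multiplicity is strictly greater than one. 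That is precisely why the paper needs its \textbf{Case II} (a branching-rule computation showing that \emph{all} copies of $\chi_{M,s}$ land in a single irreducible constituent) and \textbf{Case III} (induction in stages through an intermediate Levi, e.g.\ embedding $\pi$ into $i_T^G\lambda_{a.d.}$ and quoting that a standard module has irreducible socle). Without a substitute for those arguments, your method cannot conclude uniqueness of the socle in the majority of the reducible cases. Relatedly, determining reducibility from poles and zeros of the long intertwining operator via Gindikin--Karpelevich is not adequate at the non-regular points; the paper instead uses the Jacquet-module comparison criterion (\textbf{RC}, \Cref{Lem_RC}) against auxiliary degenerate principal series sharing an anti-dominant exponent, together with branching rules for irreducibility. (This second point only matters if you insist on re-deriving the reducibility locus rather than taking it as input.)

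Second, the exceptional case $\Omega=\bk{\chi\circ\omega_4}\FNorm{\omega_4}^{1/2}$ with $\chi$ cubic is the actual content of the second half of the corollary, and your proposal defers it entirely (``demands an explicit analysis \dots together with careful bookkeeping of the $\mu_3$-symmetry''). The paper's argument here is concrete: one checks that $\lambda_{a.d.}=w_{42354}\cdot\lambda_0$ with the corresponding intertwining operator an isomorphism, so $\pi\hookrightarrow i_T^G\lambda_{a.d.}$; one computes $\Stab_W\bk{\lambda_{a.d.}}\cong\Z\rmod 3\Z$, invokes Keys' results to split $i_T^{M_4}\lambda_{a.d.}=\bigoplus_{\xi\in\Z\rmod 3\Z}\sigma_\xi$, notes each $i_{M_4}^G\sigma_\xi$ is a standard module with unique irreducible subrepresentation $\pi_\xi$, and then a count of $mult\bk{\lambda_{a.d.}}$ in $r_T^G\pi$ versus $r_T^G\pi_\xi$ forces all three $\pi_\xi$ to occur in the socle of $\pi$ and nothing else to. Your appeal to ``the order-three centres of the two $A_2$-factors'' gestures at the right phenomenon but is not the mechanism (the relevant $\Z\rmod3\Z$ is a stabilizer in $W$ of the anti-dominant exponent, i.e.\ an R-group computation on the torus), and no argument is given that the length is exactly $3$ rather than $1$ or $2$, nor that the quotient end remains irreducible. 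As written, the proposal is an outline with the two hardest ingredients missing.
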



Similar studies were performed for both classical and smaller exceptional groups.
For example, see:
\begin{itemize}
	\item \cite{MR584084,MR0425030} for general linear groups. (It should be noted that the scope of these works goes beyond degenerate principal series)
	\item \cite{MR1134591} for symplectic groups.
	\item \cite{MR2017065,MR1346929} for orthogonal groups.
	\item \cite{MR1480543} for type $G_2$.
	\item \cite{MR2778237} for type $F_4$.
\end{itemize}

The reason that such a study was not preformed for groups of type $E_n$ before, is that Weyl groups of these types are extremely big and have complicated structure.
For that reason, in \Cref{Sec_algorithm} we describe an algorithm, to calculate the reducibility of degenerate principal series.
This algorithm was implemented by us using \emph{Sagemath} (\cite{sagemath}).

The study of degenerate principle series for groups of type $E_7$ and $E_8$, using the algorithm described in this paper, is a work in progress.

This paper is organized as follows:
\begin{itemize}
	\item \Cref{Sec_Preliminaries} introduces the notations used in this paper.
	
	\item \Cref{Sec_algorithm} introduces the algorithm used by us to calculate the reducibility of $\pi=\Ind_P^G\Omega$ and the maximal semi-simple subrepresentation and quotient of $\pi$.
	
	\item \Cref{Sec_degenerate_principal_series_E6} begins with a short introduction of the group of $G$ and contains our main result, \Cref{Thm:Deg_PS_of_E6}, the reducibility of degenerate principal series of $E_6$, their subrepresentations and their quotients.
	
	\item \Cref{App:Branching_Rules} contains a few technical results which are useful in the implementation of the algorithm in \Cref{Sec_algorithm}.
	
	\item \Cref{App:Example_of_Irr_Cir} contains an example of the application of the irreducibility test described in \Cref{Sec_algorithm}.
\end{itemize}

\todonum{Maybe add something about the residual spectrum to the introduction?}

%

{\bf Acknowledgments.} 
Both authors wish to thank Nadya Gurevich for many helpful discussion during the course of this work.

This research was supported by the Israel Science Foundation, grant number
421/17 (Segal).

\section{Preliminaries and Notations}
\label{Sec_Preliminaries}
Let $k$ be a locally-compact non-Archimedean field and $G$ the group of $k$-rational points of a split simply-connected reductive algebraic group over $k$.
Fix a Borel subgroup $B$ of $G$ with torus $T$ and assume that $n=rank\bk{G}$.
Let $\Phi$ denote the set of roots of $G$ with respect to $T$ and let $\Delta$ denote the set of simple roots with respect to $B$.
Also, let $\Phi^{+}$ denote the set of positive roots of $G$ with respect to $B$.
For $\alpha\in\Delta$, let $\check{\alpha}$ denote the associated co-character and let $\omega_\alpha$ denote the associated fundamental weight.
Namely,
\[
\gen{\omega_\alpha,\check{\beta}} = \delta_{\alpha,\beta} \quad \forall \alpha,\beta\in\Delta ,
\]
where $\delta_{\alpha,\beta}$ denotes the usual Kronecker delta function on $\Delta$.

Let $W=W_G$ denote the Weyl group of $G$ with respect to $T$, it is generated by simple reflections $s_\alpha$ with respect to the simple roots $\alpha\in\Delta$

For $\Theta\subset\Delta$, let $P_\Theta$ denote the associated standard parabolic subgroup of $G$ and denote its Levi subgroup by $M_\Theta$.
In this case, let $\Delta_M=\Theta$.
Let $\Phi_M = \Phi\cap Span_\Z\bk{\Delta_M}$ be the set of roots of $M$ with respect to $T$, $\Phi_M^{+}=\Phi^{+}\cap\Phi_M$ be the set of positive roots of $M$ and let
\[
\rho_M=\rho_M^G=\frac{1}{2} \suml_{\alpha\in\Phi_M^{+}} \alpha
\]
be the half-sum of positive roots in $M$.
Also, let $W_M$ denote the Weyl group of $M$ relative to $T$, it is the subgroup of $W_G$ generated by $\set{s_\alpha\mvert \alpha\in\Delta_M}$.

For any $\alpha\in\Delta$ we denote $L_\alpha=M_{\set{\alpha}}$ and $M_\alpha=M_{\Delta\setminus\set{\alpha}}$.


For a Levi subgroup $M$ of $G$, we denote its complex manifold of characters by $\bfX\bk{M}$.
As $\bfX\bk{M}$ is a commutative group, it is convenient to adopt additive notations for its elements.
In particular, for $\lambda\in\bfX\bk{M}$, $-\lambda$ denotes its inverse.
The trivial element in $\bfX\bk{M}$ will be denoted by either $\Id$ or $0$.

\todonum{Think if there is a better way of presenting the different spaces of characters. In particular, think about the equation $\bfX\bk{M} = \mathfrak{a}^\ast_{M,\C} \oplus \bfX_{M,0}$.}

Within $\bfX\bk{M}$, we distinct the \emph{real and imaginary characters} on $M$,
\[
\Real\bk{\bfX\bk{M}} = \set{\lambda \in \bfX\bk{M} \mvert \gen{\lambda,\check{\alpha}}\in\R \ \forall \alpha\in\Delta},\quad \Imaginary\bk{\bfX\bk{M}} = i \Real\bk{\bfX\bk{M}} .
\]
It holds that $\bfX\bk{M} = \Real\bk{\bfX\bk{M}} \oplus \Imaginary\bk{\bfX\bk{M}}$.
For $\mu\in \bfX\bk{M}$, we write $\Real\bk{\mu}$ for the projection of $\mu$ to $\Real\bk{\bfX\bk{M}}$ and $\Imaginary\bk{\mu}$ for the projection of $\mu$ to $\Imaginary\bk{\bfX\bk{M}}$.

Let $\mu\in \bfX\bk{M}$, and assume that there exists $k\in\N$ such that $\mu^k=\Id$.
We say that $\mu$ is of \emph{finite order} and denote the minimal such $k$ by $ord\bk{\mu}$, the \emph{order of $\mu$}.
In particular, $ord\bk{\Id}=1$ and, for a quadratic character $\mu$, $ord\bk{\mu}=2$.

We further denote by $X^\ast\bk{M}$ the ring of rational characters of $M$ and let $\mathfrak{a}_{M,\C}^\ast=X^\ast\bk{M}\otimes_\Z\C$.
One can choose a direct sum complement $\bfX\bk{M} = \mathfrak{a}^\ast_{M,\C} \oplus \bfX_{M,0}$, where the characters in $\bfX_{M,0}$ are of finite order.

We note that restriction to $T$ gives rise to the following inclusions
\begin{equation}
\label{Eq:Inculsions_of_character_groups}
\iota_M:\bfX\bk{M} \hookrightarrow \bfX\bk{T}, \quad
X^\ast\bk{M}\hookrightarrow X^\ast\bk{T}, \quad
\mathfrak{a}^\ast_{M,\C}\hookrightarrow \mathfrak{a}^\ast_{T,\C}, \quad
\bfX_{M,0} \hookrightarrow \bfX_{T,0}.
\end{equation}

The image of these embeddings can be identified by restriction to the derived group $M^{der}$.
Namely, for $\chi\in \bfX\bk{T}$ it holds that $\chi\in \bfX\bk{M}$ if and only if $\gen{\chi,\check{\alpha}}=0$ for all $\alpha\in\Delta_M$.
Similarly for $X^\ast\bk{M}$, $\mathfrak{a}^\ast_{M,\C}$ and $\bfX_{M,0}$.

Recall that $\mathfrak{a}^\ast_{M,\C}\cong \C^{n-\Card{\Delta_M}}$.
We use this identification and write elements of $\mathfrak{a}^\ast_{M,\C}$ as vectors in $\C^{n-\Card{\Delta_M}}$ whose basis elements are the fundamental weights $\omega_\alpha$ for $\alpha\in\Delta\setminus\Delta_M$.

In fact, by identify $\bfX\bk{\Gm\bk{k}}$ with $\C$ we may write any element $\Omega\in\bfX\bk{M}$ in the form
\[
\Omega = \suml_{\alpha\in\Delta\setminus\Delta_\bfM} \Omega_\alpha\circ \omega_\alpha,
\]
where $\Omega_\alpha\in\bfX\bk{\Gm\bk{k}}$.

Let $\lambda\in \bfX\bk{T}$.
We say that $\lambda$ is \emph{dominant} if
\[
\Real\bk{\gen{\lambda,\check{\alpha}}} \geq 0 \quad \forall \alpha\in\Delta .
\]
In other words, $\Real\bk{\lambda}$ is in the (closed) positive Weyl chamber.
We say that $\lambda$ is \emph{anti-dominant} if $-\lambda$ is dominant.

Let $M$ be the Levi subgroup of a parabolic subgroup $P$ of $G$.
We recall two functors associating representations of $M$ and $G$.
\begin{itemize}
	\item For a representation $\Omega$ of $M$, we denote the normalized parabolic induction of $\Omega$ to $G$ by $i_M^G\Omega$.
	In particular, $i_M^G:Rep\bk{M} \to Rep\bk{G}$.
	
	\item For a representation $\pi$ of $G$, we denote the normalized Jacquet functor of $\pi$ along $P$ by $r_M^G\pi$.
	In particular, $r_M^G:Rep\bk{G} \to Rep\bk{M}$.
\end{itemize}
These two functors are adjunct to each other, namely they satisfy the Frobenius reciprocity
\begin{equation}
\Hom_G\bk{\pi,i_M^G\sigma} \cong \Hom_M\bk{r_M^G\pi,\sigma} .
\end{equation}

Note that for a 1-dimensional representation $\Omega$ of $M$, it holds that
\[
r_T^M\Omega = \iota_M\bk{\Omega} - \rho_{B\cap M}^M .
\]

In most of this work, we consider representations of a group $H$ as elements in the Grothendieck ring $\mathfrak{R}\bk{H}$ of $H$.
In particular, we write $\pi=\pi_1+\pi_2$ if, for any irreducible representation $\rho$ of $H$, it holds that
\[
mult\bk{\rho,\pi} = mult\bk{\rho,\pi_1} + mult\bk{\rho,\pi_2}.
\]
Here, $mult\bk{\rho,\pi}$ denotes the multiplicity of $\rho$ in the Jordan-H\"older series of $\pi$.
Furthermore, we write $\pi\leq \pi'$ if, for any irreducible representation $\rho$ of $H$, it holds that
\[
mult\bk{\rho,\pi} \leq mult\bk{\rho,\pi'}.
\]

\begin{Remark}
	Both $\bfX\bk{M}\subset Rep\bk{M}$ and $\mathfrak{R}\bk{M}$ are commutative groups of interest here.
	Furthermore, there is a natural map
	\[
	Rep\bk{M} \longrightarrow \mathfrak{R}\bk{M}.
	\]
	And so, elements of $\bfX\bk{M}$ can be identified with elements in $\mathfrak{R}\bk{M}$.
	However, this is not a group homomorphism.
	Namely, the sum of two characters in $\bfX\bk{M}$ is a character in $\bfX\bk{M}$, while a sum of two characters in $\mathfrak{R}\bk{M}$ is a two-dimensional representation of $M$.
	
	Hence, in order to avoid confusion, given $\Omega\in Rep\bk{M}$ we will write $\coset{\Omega}$ for the image of  in $\mathfrak{R}\bk{M}$.
	In that spirit, we have
	\begin{itemize}
		\item $\Omega_1+\Omega_2\in\bfX\bk{M}$ is the product of $\Omega_1$ and $\Omega_2$ in $\bfX\bk{M}$, i.e. $\bk{\Omega_1+\Omega_2}\bk{m} = \Omega_1\bk{m}\Omega_2\bk{m}$.
		\item $n\cdot\Omega \in \bfX\bk{M}$ is the character given by $\bk{n\cdot \Omega}\bk{m} = \Omega\bk{m^n}$.
		\item $-\Omega$ is the inverse of $\Omega \in \bfX\bk{M}$.
	\end{itemize}
	While, on the other hand,
	\begin{itemize}
		\item $\coset{\Omega_1}+\coset{\Omega_2}$ is the class, in $\mathfrak{R}\bk{M}$, of the direct sum $\Omega_1\oplus\Omega_2$.
		\item $n\times\coset{\Omega} \in \mathfrak{R}\bk{M}$ is the class in $\mathfrak{R}\bk{M}$ of $\oplus_{i=1}^n \Omega$.
		\item $-\coset{\Omega}$, with $\Omega \in Rep\bk{M}$, is a virtual representation.
	\end{itemize}
\end{Remark}

We recall the following fundamental result (see \cite[lem. 2.12]{MR0579172} \cite[Theorem 6.3.6]{Casselman}).
\begin{Lem}[Geometric Lemma]
	For Levi subgroups $L$ and $M$ of $G$, let
	\[
	W^{M,L} = \set{w\in W\mvert w\bk{\Phi^+_M}\subset\Phi^+,\ w^{-1}\bk{\Phi^+_L}\subset\Phi^{+}}
	\]
	be the set of shortest representatives in $W$ of $W_L\lmod W\rmod W_M$.
	For a smooth representation $\Omega$ of $M$, the Jacquet functor $r_L^G i_M^G \Omega$, as an element of $\mathfrak{R}\bk{L}$, is given by:
	\begin{equation}
	\coset{r_L^G i_M^G \Omega} = \sum_{w\in W^{M,L}} \coset{i_{L'}^L \circ w \circ r_{M'}^M \Omega} ,
	\end{equation}
	where, for $w\in W^{M,L}$, 
	\begin{align*}
	& M'=M\cap w^{-1}\bk{L} \\
	& L'=w\bk{M}\cap L .
	\end{align*}
\end{Lem}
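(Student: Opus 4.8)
The plan is to establish this via the geometric analysis of the Bruhat decomposition, following Bernstein--Zelevinsky and Casselman. Write $P = M U_P$ for the parabolic with Levi $M$ underlying $i_M^G$, and $Q = L U_Q$ for the one with Levi $L$ underlying $r_L^G$. The starting point is the decomposition of $G$ into $Q$--$P$ double cosets,
\[
G = \bigsqcup_{w \in W^{M,L}} Q \, \dot{w} \, P ,
\]
indexed by $W^{M,L}$, the set of shortest representatives of $W_L \lmod W \rmod W_M$. The defining conditions $w\bk{\Phi_M^+} \subset \Phi^+$ and $w^{-1}\bk{\Phi_L^+} \subset \Phi^+$ are precisely what single out the minimal-length element in each double coset, and a preliminary (purely combinatorial) step is to confirm that these elements form a complete and irredundant set of representatives.

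First I would order the double cosets by the closure partial order on $Q \lmod G \rmod P$, equivalently by the length of $w$. Unions of cells that are closed in $G$ are $Q$-stable, so restricting sections in the space of $i_M^G \Omega$ to successively larger closed unions produces a finite filtration of $i_M^G \Omega$ by $Q$-submodules, whose successive subquotients $\Pi_w$ are the spaces of sections supported on a single orbit $Q \dot{w} P$. A local Mackey-type computation on each orbit identifies $\Pi_w$, as an $L$-module, with a twisted composite: restrict $\Omega$ along the parabolic of $M$ with Levi $M' = M \cap w^{-1}\bk{L}$, transport it by $w$ (which carries $M'$ isomorphically onto $L'$), and induce along the parabolic of $L$ with Levi $L' = w\bk{M} \cap L$. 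In unnormalized form this is an equality up to an explicit modulus twist arising from the $U_Q$- and $U_P$-orbit measures.

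The second step is to pass to the Grothendieck ring. Since the normalized Jacquet functor $r_L^G$ is exact, applying it to the filtration yields, in $\mathfrak{R}\bk{L}$,
\[
\coset{r_L^G i_M^G \Omega} = \suml_{w \in W^{M,L}} \coset{\Pi_w} ,
\]
and exactness lets me treat each $\Pi_w$ independently. It then remains to match each $\Pi_w$ with the normalized composite $i_{L'}^L \circ w \circ r_{M'}^M \Omega$.

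The main obstacle --- and the only genuinely delicate point --- is the bookkeeping of modulus characters that makes the normalization factors cancel, so that the normalized functors appear with no residual $\delta$-twist. One must verify that the half-sums of roots attached to the unipotent radicals of $P$, $Q$ and of the inner parabolics with Levi subgroups $M'$ and $L'$, together with the Jacobian of the orbit map, recombine --- using the behaviour of these $\rho$'s under conjugation by $w$ --- exactly into the normalizing characters of $i_{L'}^L$, $r_{M'}^M$ and of the outer functors $i_M^G$, $r_L^G$. Granting this cancellation, each normalized $\Pi_w$ equals $i_{L'}^L \circ w \circ r_{M'}^M \Omega$, and summing over $w \in W^{M,L}$ gives the claim.
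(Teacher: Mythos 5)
The paper does not prove this lemma at all: it is recalled as a known result with citations to Bernstein--Zelevinsky \cite[Lemma 2.12]{MR0579172} and Casselman \cite[Theorem 6.3.6]{Casselman}, so there is no in-paper argument to compare against. Your sketch is a faithful outline of exactly the proof given in those references --- Bruhat decomposition of $G$ into $Q$--$P$ double cosets indexed by the minimal-length representatives, the filtration of $i_M^G\Omega$ by $Q$-submodules coming from the closure order on the orbits, the Mackey-type identification of each graded piece, exactness of the Jacquet functor to pass to the Grothendieck group, and the cancellation of modulus characters that turns the unnormalized composite into the normalized one. The one step you defer (``granting this cancellation'') is the computation that $w$ carries the relevant half-sums of roots for $M'$, $L'$, $P$ and $Q$ into one another so that all $\delta^{1/2}$-factors recombine; this is a finite root-theoretic verification carried out explicitly in the cited sources, and acknowledging it without executing it is the only place where your write-up falls short of a complete proof. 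As an outline of the standard argument it is correct and is the same route the paper's references take.
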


In particular, for an admissible representation $\Omega$ of $M$, the Jacquet functor $r_T^M\Omega$, as an element of $\mathfrak{R}\bk{T}$, is a finite sum of one-dimensional representations of $T$.
Each such representation is called an \emph{exponent} of $\Omega$.
If $\lambda=r_T^M\Omega$ is one-dimensional, then $\lambda$ is said to be the \emph{leading exponent} of $i_M^G\Omega$.

Let $M=M_{\Delta\setminus\set{\alpha}}$ denote the Levi subgroup of a standard maximal parabolic subgroup $P$ of $G$.
Any element in $\bfX\bk{M}$ is of the form 
\begin{equation}
\Omega_{M,\chi,s} = \bk{s+\chi}\circ\omega_\alpha,
\end{equation}
where $s\in\C$ and $\chi\in\bfX_{M,0}$.
We denote $\chi_{M,s}=r_T^G \Omega_{M,\chi,s}$ and
\[
I_P\bk{\chi,s} = i_M^G \Omega_{M,\chi,s} .
\]
Note that
\begin{equation}
\label{Eq_I_P_injects_to_I_B}
I_P\bk{\chi,s} \hookrightarrow i_T^G \chi_{M,s} .
\end{equation}


In this paper we study the degenerate principal series representations of a the simply-connected, simple, split group $G$ of type $E_6$.
Namely, we study the reducibility of $I_P\bk{\chi,s}$, when $P$ is a maximal parabolic subgroup of $G$.
Since the algorithm described in \Cref{Sec_algorithm} is applicable to other groups, we postpone the discussion on the structure of this group to \Cref{Sec_degenerate_principal_series_E6}.

\section{Method Outline}
\label{Sec_algorithm}
In this section we explain the algorithm used to compute the reducibility of degenerate principal series and the mathematical background behinds it.

Through out this section, we fix $G$.
For a maximal parabolic subgroup $P$ of $G$ with Levi subgroup $M$, $s\in\C$ and $\chi\in\bfX_{M,0}\cong\bfX_{GL_1,0}$ let $\pi=I_P\bk{\chi,s}$.
For any such triple $\bk{P,s,\chi}$ we wish to determine the reducibility of $I_P\bk{\chi,s}$ and the structure of its maximal semi-simple subrepresentation and quotient.
The algorithm we implement for doing this has four parts:
\begin{enumerate}
	\item 
	For regular $I_P\bk{\chi,s}$ determine whether $I_P\bk{\chi,s}$ is reducible or not.
	This part is completely determined by the results of \cite{Casselman,MR0579172}.
	\item Apply various reducibility criteria for non-regular $I_P\bk{\chi,s}$.
	\item Apply an irreducibility test for non-regular $I_P\bk{\chi,s}$ by applying a chain of \emph{branching rules} (which will be introduced in \Cref{Subsec:Irr_Test}).
	\item
	For a reducible representation, determine its maximal semi-simple subrepresentation and quotient.
\end{enumerate}

For the convenience of the reader, in framed boxes throughout this section, we outline the way in which we implemented the relevant calculations.

Also, we wish to point out that, as it turns out, the results of the following calculations depend on $ord\bk{\chi}$ but not the particular choice of $\chi$.


\subsection{Part \rom{1} - Regular Case}

We say that $I_P\bk{\chi,s}$ is \emph{regular} if $\Stab_W\bk{\chi_{M,s}}=\set{e}$.

\begin{Remark}
	We note the following facts regarding stabilizers of elements in $\bfX\bk{M}$:
	\begin{itemize}
		\item The stabilizer of a character $\lambda\in \mathfrak{a}_{T,\R}^\ast$ is generated by the generalized reflections $s_\beta$ for $\beta\in\Phi$ orthogonal to $\lambda$.
		
		\item If $I_P\bk{\chi,s}$ is non-regular, then the imaginary part must satisfy $\Imaginary\bk{s}\in 2\pi\cdot\Q	$.
		Namely, $\Imaginary\bk{s}$ (as an element of $\bfX\bk{GL_1}$) has finite order.
		In particular, one can assume that $s\in\R$.
		
		\item Since $\Stab_W\bk{\chi_{M,s}} = \Stab_W\bk{\Real\bk{\chi_{M,s}}} \cap \Stab_W\bk{\Imaginary\bk{\chi_{M,s}}}$, it follows that if $I_P\bk{\chi,s}$ is non-regular, then $\Stab_W\bk{\Real\bk{\chi_{M,s}}}$ is non-trivial.
		
		\item Given $M$ and $s\in\R$, $\Stab_W\bk{\chi_{M,s}}$ depends only on the order of $\chi$.
		\item Note that there are only finitely many triples $\bk{P,\chi,s}$ such that $I_P\bk{\chi,s}$ is non-regular.
	\end{itemize}
\end{Remark}


\begin{framed}
	\textbf{Calculating non-regular points:}
	
	In order to find all non-regular $I_P\bk{\chi,s}$ we proceed with the following steps:
	
	\begin{itemize}
		\item We start by making a list $X$ all values of $s$ such that $\Stab_W\bk{\Real\bk{\chi_{M,s}}}$ is non-trivial.
		We do this be listing all $s\in\R$ such that $\Real\bk{\gen{\chi_{M,s},\check{\alpha}}}=0$ for some $\alpha\in\Phi^{+}$.
		
		\item We then make a list $Y$ of all values $m\in\N$ such that $\Stab_W\bk{\Imaginary\bk{\chi_{M,s}}}$ is non-trivial.
		For any $w \in W^{M,T}$, we write $w\cdot \omega_\alpha -\omega_\alpha =\sum_{\alpha \in \Delta_{G}}{n_{\alpha}\alpha}$ and let $m_0 = gcd \bk{n_{\alpha}}_{\alpha \in \Delta_{G}}$.
		We list all the positive divisors $m$ of $m_0$.
		
		\item For all candidates $\bk{s,m}\in X\times Y$, we check if $\chi_{M,s}$ is non-regular, where $\chi$ is of order $m$.
		\begin{itemize}
			\item Let $w\in W$ such that $w\cdot\Real\bk{\chi_{M,s}}$ is anti-dominant and denote $\lambda_{a.d.}=w\cdot\chi_{M,s}$.
			\item The stabilizer of $\Real\bk{\lambda_{a.d.}}$ is generated by the simple reflections associated with $\alpha\in\Delta$ such that $\gen{\Real\bk{\lambda_{a.d.}},\check{\alpha}}=0$.
			\item The imaginary part of $\lambda_{a.d.}$ is given by $\Imaginary\bk{\lambda_{a.d.}}=\chi\circ\bk{w\cdot\omega_i}$.
			\item
			If there exits a non trivial $u \in  \Stab_W\bk{\Real\bk{\chi_{M,s}}}$ such that $\chi \circ \bk{w \cdot \omega_i} =  \chi \circ \bk{uw \cdot \omega_i}$ then $I_{P}\bk{\chi,s}$ is non-regular.
		\end{itemize}
	\end{itemize}

\end{framed}

For $\pi=I_P\bk{\chi,s}$ and a Levi subgroup $L$ of $G$ we let its \emph{Bernstein-Zelevinsky set} to be
\begin{equation}
BZ_L\bk{\pi} = \set{i_{L'}^L\circ w\circ\tau \mvert 
	\begin{matrix}
	w\in W^{M,L} \\
	M'=M\cap w^{-1}\bk{L} \\
	L'=w\bk{M}\cap L \\
	\tau\leq r_{M'}^M\bk{\Omega_{M,\chi,s}} \text{ is irreducible} \\
	\end{matrix}
}
\end{equation}


We quote a corollary of \cite[Theorem 3.1.2]{MR1134591} for this case.
\begin{Cor}
	\label{Thm:BZ-components}
	The following are equivalent:
	\begin{itemize}
		\item $\pi=I_P\bk{\chi,s}$ is irreducible.
		\item $\sigma$ is irreducible for any $\alpha$ and $\sigma\in BZ_{L_{\alpha}}\bk{\pi}$.
	\end{itemize}
\end{Cor}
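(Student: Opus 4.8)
The statement to prove is \Cref{Thm:BZ-components}, which is labeled as a corollary of \cite[Theorem 3.1.2]{MR1134591}. The plan is to deduce it from that theorem, which in Kudla--Rallis' formulation characterizes irreducibility of a parabolically induced representation in terms of the behavior of its restriction to the maximal proper Levi subgroups, via the Jacquet-module/Bernstein--Zelevinsky filtration. First I would spell out the cleaner of the two implications: if $\pi = I_P(\chi,s)$ is irreducible, then for every simple root $\alpha$ and every $\sigma \in BZ_{L_\alpha}(\pi)$ the representation $\sigma$ is irreducible. This direction I expect to follow from exactness of the Jacquet functor together with the definition of $BZ_L(\pi)$: the Geometric Lemma (quoted above) expresses $\coset{r_L^G i_M^G \Omega_{M,\chi,s}}$ as a sum over $w \in W^{M,L}$ of the terms $i_{L'}^L \circ w \circ r_{M'}^M \Omega_{M,\chi,s}$, and the elements of $BZ_L(\pi)$ are exactly the induced pieces $i_{L'}^L \circ w \circ \tau$ built from irreducible constituents $\tau \le r_{M'}^M(\Omega_{M,\chi,s})$. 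If some such $\sigma$ were reducible while $\pi$ were irreducible, one would contradict the multiplicity bookkeeping in $\mathfrak{R}(L_\alpha)$ forced by the exactness of $r_{L_\alpha}^G$ applied to the (assumed irreducible) $\pi$.

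\textbf{The substantive direction.}
The harder implication is the converse: assuming that every $\sigma \in BZ_{L_\alpha}(\pi)$ is irreducible for every $\alpha$, deduce that $\pi$ itself is irreducible. My strategy here is to invoke \cite[Theorem 3.1.2]{MR1134591} essentially verbatim, after checking that our hypotheses match its hypotheses. The key reduction is that irreducibility of $\pi$ can be detected on the co-rank-one Levi subgroups $L_\alpha = M_{\{\alpha\}}$: the content of the cited theorem is that a constituent of $\pi$ is determined by its images under the Jacquet functors $r_{L_\alpha}^G$, so that if all the relevant ``building blocks'' $i_{L'}^{L_\alpha} \circ w \circ \tau$ remain irreducible, no proper nonzero subrepresentation of $\pi$ can exist. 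Concretely I would argue by contradiction: a proper subrepresentation of $\pi$ would, under some $r_{L_\alpha}^G$, split off a proper subobject of one of the induced pieces appearing in the Geometric Lemma expansion, and by Frobenius reciprocity $\Hom_G(\pi, i_M^G \Omega_{M,\chi,s}) \cong \Hom_M(r_M^G \pi, \Omega_{M,\chi,s})$ together with irreducibility of each $\sigma$ this is impossible.

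\textbf{Main obstacle.}
The principal obstacle is not a deep new idea but a careful bridging of conventions: Kudla--Rallis work with a specific group, whereas here the statement is applied to an arbitrary split simply-connected $G$ (ultimately $E_6$), and to degenerate principal series induced from one-dimensional $\Omega$ from a \emph{maximal} parabolic $P$. I would need to verify that the one-dimensionality of $\Omega$ and maximality of $P$ put us squarely within the scope of the cited theorem, and that the set $W^{M,L_\alpha}$ of shortest double-coset representatives together with the definitions $M' = M \cap w^{-1}(L_\alpha)$, $L' = w(M) \cap L_\alpha$ correctly reproduces the Jacquet-module decomposition that the theorem's hypothesis controls. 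In particular I would check that restricting attention to the co-rank-one Levis $L_\alpha$ (rather than all proper Levis) is genuinely sufficient, which is exactly the gain provided by \cite[Theorem 3.1.2]{MR1134591}. Once the dictionary between our $BZ_{L_\alpha}(\pi)$ and the objects in the cited theorem is established, the corollary follows formally; the labor is entirely in confirming that translation rather than in any further representation-theoretic input.
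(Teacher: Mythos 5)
The paper itself offers no argument for \Cref{Thm:BZ-components}: it is quoted as a corollary of \cite[Theorem 3.1.2]{MR1134591}, so your overall strategy of reducing to that theorem coincides with the paper's. The genuine gap is that you never isolate the hypothesis under which the equivalence actually holds, namely \emph{regularity} of $\chi_{M,s}$ (i.e.\ $\Stab_W\bk{\chi_{M,s}}=\set{e}$). The corollary sits inside ``Part \rom{1} --- Regular Case'' and is applied in the paper only to regular $I_P\bk{\chi,s}$; read as a statement about arbitrary degenerate principal series it is false, and your proposed arguments for both directions break exactly at the point where regularity should enter.

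Concretely, your ``cleaner'' direction ($\pi$ irreducible $\Rightarrow$ every $\sigma\in BZ_{L_\alpha}\bk{\pi}$ irreducible) does not follow from exactness of the Jacquet functor or any ``multiplicity bookkeeping'': exactness only gives the identity $\coset{r_{L_\alpha}^G\pi}=\sum\coset{\sigma}$ in $\mathfrak{R}\bk{L_\alpha}$, and nothing prevents an irreducible $\pi$ from having reducible pieces in that sum. The paper's own data furnishes a counterexample: $I_{P_1}\bk{\Id,-2}$ is non-regular and irreducible, yet its Jacquet module contains the exponent $\esixcharclass{-1}{-1}{-2}{1}{-1}{-1}$ (see \Cref{App:Example_of_Irr_Cir}), which pairs to $1$ with $\check{\alpha_4}$; the geometric-lemma piece of $r_{L_{\alpha_4}}^G\pi$ carrying this exponent is of the form $i_T^{L_{\alpha_4}}\mu$ with $\gen{\mu,\check{\alpha_4}}=\pm1$, hence a reducible member of $BZ_{L_{\alpha_4}}\bk{\pi}$. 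In the other direction, reducibility of non-regular $I_P\bk{\chi,s}$ is in general not detected by the rank-one pieces either --- this is precisely why Parts \rom{2}--\rom{4} of \Cref{Sec_algorithm} exist at all. So the ``bridge'' to \cite{MR1134591} you describe cannot be built for general $\bk{P,\chi,s}$; the correct statement, and the one the paper actually uses, presupposes regularity, under which every exponent occurs with multiplicity one, each $\sigma\in BZ_{L_\alpha}\bk{\pi}$ is either a character of $L_\alpha$ or $i_T^{L_\alpha}\mu$ with $\mu$ regular, and the Casselman--Bernstein--Zelevinsky analysis (as packaged in \cite[Theorem 3.1.2]{MR1134591}) yields the equivalence.
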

For a Levi subgroup, $L_{\alpha}$, of semi-simple rank 1 and an element $w\in W^{M,L_{\alpha}}$ the Levi factor $L_{\alpha}'$ of $L_{\alpha}$ is either $T$ or $L_{\alpha}$.
The Levi factor $M'$ of $M$ is either $T$ or $w^{-1}\bk{L_{\alpha}}$.
More precisely
\begin{itemize}
	\item If $w^{-1}\bk{\alpha_i}\in \Phi_M$ then $L_{\alpha}'=L_{\alpha}$ and $M'=w^{-1}\bk{L_{\alpha}}$.
	\item If $w^{-1}\bk{\alpha_i}\notin \Phi_M$ then $L_{\alpha}'=T$ and $M'=T$.
\end{itemize}


We recall that, for a regular $\mu\in \bfX\bk{T}$, $i_T^{L_\alpha}\mu$ is reducible if and only if $\gen{\mu,\check{\alpha}}=\pm 1$ since $L_{\alpha}$ has semi-simple rank $1$.

\begin{framed}
	\textbf{Calculating the regular points of reducibility:}
	
	First, given $\alpha\in\Delta$ and $\chi$ of order $m$, we calculate all points $s\in\R$ where $BZ_{L_{\alpha}}\bk{\pi}$ is reducible, where $\pi=i_M^G\Omega_{M,\chi,s}$.
	By the above discussion, it is given by:
	\[
	\set{s \in \R \mvert \exists \: \alpha \in \Phi_{G}^{+} \setminus \Phi_{M}^{+} : \  \gen{\chi_{M,s},\check{\alpha}} = \pm1}
	\]
%

	The list of reducible regular degenerate principal series is given by the intersection between this set and the set of $s\in\R$ such that $I_P\bk{\chi,s}$ is regular.
\end{framed}


We now wish to determine whether a non-regular $I_P\bk{\chi,s}$ is reducible.

\subsection{Part \rom{2} - Reducibility Tests}

We consider the following reducibility criterion (\cite[Lemma 3.1]{MR1658535}).
\begin{Lem}[\textbf{RC}]
	\label{Lem_RC}
	Let $\pi=i_M^G\Omega$.
	Assume there exist smooth representations $\Pi$ and $\sigma$ of $G$ of finite length and a Levi subgroup $L$ of $G$ such that:
	\begin{enumerate}
		\item $\pi\leq \Pi$, $\sigma\leq \Pi$.
		\item $r_L^G \pi +r_L^G \sigma \not\leq r_L^G\Pi$.
		\item $r_L^G\pi\not\leq r_L^G\sigma$.
	\end{enumerate}
	Then $\pi$ is reducible and admits a common irreducible subquotient with $\sigma$.
\end{Lem}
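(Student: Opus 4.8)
The plan is to argue by contradiction in two stages, using only the exactness of the Jacquet functor $r_L^G$ (so that it descends to a map of Grothendieck rings sending effective classes to effective classes, and hence preserves the partial order $\leq$) together with elementary multiplicity bookkeeping in $\mathfrak{R}\bk{G}$.

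First I would show that $\pi$ and $\sigma$ admit a common irreducible subquotient. Suppose not. Then for every irreducible representation $\rho$ of $G$, at most one of $mult\bk{\rho,\pi}$ and $mult\bk{\rho,\sigma}$ is nonzero, so $mult\bk{\rho,\pi}+mult\bk{\rho,\sigma}=\max\set{mult\bk{\rho,\pi},mult\bk{\rho,\sigma}}$. By hypothesis (1), each of $mult\bk{\rho,\pi}$ and $mult\bk{\rho,\sigma}$ is bounded by $mult\bk{\rho,\Pi}$, whence $mult\bk{\rho,\pi}+mult\bk{\rho,\sigma}\leq mult\bk{\rho,\Pi}$ for every $\rho$; that is, $\pi+\sigma\leq\Pi$ in $\mathfrak{R}\bk{G}$. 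Applying the exact functor $r_L^G$ preserves this inequality and yields $r_L^G\pi+r_L^G\sigma\leq r_L^G\Pi$, directly contradicting hypothesis (2). Hence a common irreducible subquotient $\rho_0$ of $\pi$ and $\sigma$ must exist.

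Next I would deduce the reducibility of $\pi$. Suppose, to the contrary, that $\pi$ is irreducible. Then its only irreducible subquotient is $\pi$ itself, so the common subquotient forces $\rho_0=\pi$, and in particular $mult\bk{\pi,\sigma}\geq 1$. Since $\pi$ is irreducible, this gives $\pi\leq\sigma$ in $\mathfrak{R}\bk{G}$: the multiplicity of $\pi$ in $\sigma$ is at least its multiplicity $1$ in $\pi$, while every other irreducible occurs in $\pi$ with multiplicity $0$. Applying $r_L^G$ once more preserves the order and produces $r_L^G\pi\leq r_L^G\sigma$, contradicting hypothesis (3). Therefore $\pi$ is reducible, and together with the previous paragraph this completes the proof.

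There is no analytic obstacle here; the entire argument is formal once one passes to the Grothendieck ring and invokes the exactness of $r_L^G$. The single point demanding care — and the only place where a careless argument could fail — is the implication from \emph{$\pi\leq\Pi$, $\sigma\leq\Pi$, and no common subquotient} to \emph{$\pi+\sigma\leq\Pi$}: this is false without the disjointness hypothesis, and it is precisely the identity $mult\bk{\rho,\pi}+mult\bk{\rho,\sigma}=\max\set{mult\bk{\rho,\pi},mult\bk{\rho,\sigma}}$ valid under disjointness that makes the passage legitimate. Everything else is a routine transport of inequalities from $\mathfrak{R}\bk{G}$ to $\mathfrak{R}\bk{L}$.
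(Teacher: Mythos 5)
Your argument is correct: both halves are the standard formal manipulations in the Grothendieck group, and you correctly isolate the one delicate step (disjointness is what lets you add the two inequalities $\pi\leq\Pi$, $\sigma\leq\Pi$ into $\pi+\sigma\leq\Pi$), with the transport to $\mathfrak{R}\bk{L}$ justified by exactness of $r_L^G$. The paper itself does not prove this lemma but quotes it from \cite[Lemma 3.1]{MR1658535}, and your proof is essentially the argument given there, so there is nothing to fix or compare further.
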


We also have:
\begin{Lem}
	\label{Lem_mult_of_antidominant_exponent}
	For $\lambda_{a.d.}\in W\cdot\chi_{M,s}$, such that $\Real\bk{\lambda_{a.d.}}$ is anti-dominant, it holds that
	\[
	mult_{r_T^G\pi}\bk{\lambda_{a.d.}} = \# \Stab_W\bk{\lambda_{a.d.}} .
	\]
\end{Lem}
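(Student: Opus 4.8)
The plan is to compute $\coset{r_T^G\pi}$ explicitly via the Geometric Lemma and then single out the contribution of $\lambda_{a.d.}$ by a short positivity argument based on anti-dominance. First I would apply the Geometric Lemma with $L=T$. Since $T$ has no roots, $W^{M,T}$ is exactly the set of minimal-length representatives of $W\rmod W_M$, and for every $w\in W^{M,T}$ one has $M'=M\cap w^{-1}\bk{T}=T$ and $T'=w\bk{M}\cap T=T$. Because $\Omega=\Omega_{M,\chi,s}$ is one-dimensional, $r_T^M\Omega$ is the single character $\chi_{M,s}=\iota_M\bk{\Omega}-\rho_{B\cap M}^M$, so the lemma yields
\[
\coset{r_T^G\pi}=\suml_{w\in W^{M,T}} \coset{w\cdot\chi_{M,s}},
\]
where $w\cdot$ is the linear action of $W$ on $\bfX\bk{T}$. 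Hence $mult_{r_T^G\pi}\bk{\lambda_{a.d.}}=\#\set{w\in W^{M,T}\mvert w\cdot\chi_{M,s}=\lambda_{a.d.}}$, and the problem is reduced to a counting statement inside $W$.

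Next I would compare this count against the count over the full group $W$. Since $\lambda_{a.d.}\in W\cdot\chi_{M,s}$, the set $\set{w\in W\mvert w\cdot\chi_{M,s}=\lambda_{a.d.}}$ is nonempty, hence is a left coset of $\Stab_W\bk{\chi_{M,s}}$; its cardinality is therefore $\#\Stab_W\bk{\chi_{M,s}}=\#\Stab_W\bk{\lambda_{a.d.}}$, the two stabilizers being conjugate. Consequently it suffices to prove that \emph{every} $w\in W$ with $w\cdot\chi_{M,s}=\lambda_{a.d.}$ already lies in $W^{M,T}$: granting this, the count over $W^{M,T}$ equals the count over $W$, which is exactly $\#\Stab_W\bk{\lambda_{a.d.}}$.

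The heart of the matter is a single pairing computation. For every $\beta\in\Delta_M$ we have $\gen{\chi_{M,s},\check\beta}=\gen{\iota_M\bk{\Omega},\check\beta}-\gen{\rho_{B\cap M}^M,\check\beta}=0-1=-1$, using that $\iota_M\bk{\Omega}$ is orthogonal to the coroots of $M$ and that $\beta$ is a simple root of $M$. Now suppose $w\notin W^{M,T}$; then there is a simple root $\beta\in\Delta_M$ with $w\beta\in\Phi^{-}$, say $w\beta=-\delta$ with $\delta\in\Phi^{+}$, so that $w\bk{\check\beta}=-\check\delta$. By $W$-invariance of the pairing and $w\cdot\chi_{M,s}=\lambda_{a.d.}$, taking real parts gives
\[
\gen{\Real\bk{\lambda_{a.d.}},\check\delta}=-\gen{\Real\bk{w\cdot\chi_{M,s}},w\bk{\check\beta}}=-\gen{\Real\bk{\chi_{M,s}},\check\beta}=1>0.
\]
Since $\check\delta$ is a positive coroot, this contradicts the anti-dominance of $\Real\bk{\lambda_{a.d.}}$. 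Hence $w\in W^{M,T}$, completing the argument.

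The only delicate points are bookkeeping ones: checking that the Geometric Lemma produces the honest linear $W$-action with no spurious $\rho$-shift, so that $\chi_{M,s}$ is the correct base point for the exponents, and the normalization $\gen{\chi_{M,s},\check\beta}=-1$ on simple coroots of $M$ (this is precisely the ``$-\rho_{B\cap M}^M$'' in the leading exponent). Once these are in place, the anti-dominance of $\Real\bk{\lambda_{a.d.}}$ is exactly what forbids $w\beta<0$ for $\beta\in\Delta_M$, and so does all the work; I do not expect any genuine obstacle beyond getting these normalizations right.
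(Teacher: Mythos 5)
Your proof is correct and follows essentially the same route as the paper: everything reduces to showing that every $w\in W$ carrying $\Real\bk{\chi_{M,s}}$ into the anti-dominant chamber already lies in $W^{M,T}$, which both arguments deduce from the normalization $\gen{\Real\bk{\chi_{M,s}},\check{\beta}}=-1$ for $\beta\in\Delta_M$. The only difference is cosmetic: where the paper invokes its sorting algorithm and the "starts with $s_j$" length criterion (which is equivalent to your condition $w\beta\in\Phi^{-}$), you run the equivalent coroot-positivity computation directly, and you also write out the Geometric Lemma and coset-counting steps that the paper's proof leaves implicit.
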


\begin{proof}
	We first recall an algorithm to find all the reduced Weyl words $w\in W$ such that, for a given $\lambda\in \mathfrak{a}_{T,\R}^\ast$, $w\cdot \lambda$ is in the closed fundamental Weyl chamber.
	Write $\lambda=\suml_{i=1}^n a_i \omega_i$.
	Choose a coordinate $i$ such that $a_i<0$ and apply the simple reflection $s_i$.
	Note that, in this case, $\lambda\prec s_i\bk{\lambda}$, namely $\gen{s_i\cdot\lambda-\lambda,\check{\alpha}}\geq 0$ for any $\alpha\in\Delta$.
	Repeating this process will produce a reduced Weyl word $w=w[i_l,...,i_1]$ such that $w\cdot\lambda$ is dominant.
	Note that if $a_i<0$, then $s_i\bk{\lambda}\prec \lambda$.
	
	A similar approach can be used to find all Weyl elements $w\in W$ so that $w\cdot\lambda$ is in the negative Weyl chamber (i.e. anti-dominant).
		
	We say that a Weyl element $w\in W$ \emph{starts} with $s_i$ if $len\bk{ws_i}<len\bk{w}$.

	Write $\Real\bk{\chi_{M,s}}=\suml_{i=1}^{n} a_i\omega_i$.
	As $P=P_i$ we have $a_j=-1$ for all $i\neq j$ and $a_i\geq 0$.
	By the above mentioned algorithm, any Weyl word $w\in W$ such that $w\cdot \Real\bk{\chi_{M,s}}$ is anti-dominant,starts with $s_i$, as $a_i>0$ and does not start with $s_j$ for $j\neq i$ since $a_j<0$.
	On the other hand,
	\[
	W^{M,T}= \set{w\in W\mvert l\bk{ws_j}>l\bk{w}\ \forall j\neq i} .
	\]
	It follows that
	\[
	\set{w\in W\mvert w\cdot\Real\bk{\chi_{M,s}} \text{ anti-dominant}} \subset W^{M,T}.
	\]
\end{proof}



\begin{Remark}
	Given two degenerate principal series $i_M^G\Omega$ and $i_{M'}^G\Omega'$ such that $W\cdot r_M^G\Omega\cap W\cdot r_{M'}^G\Omega'\neq \emptyset$ it follows that $W\cdot r_M^G\Omega = W \cdot r_{M'}^G\Omega'$.
	Furthermore, \Cref{Lem_mult_of_antidominant_exponent} implies that for any anti-dominant $\lambda_{a.d.}\in r_T^Gi_M^G\Omega$ it holds that
	\[
	mult_{r_T^Gi_M^G\Omega}\bk{\lambda_{a.d.}} = mult_{r_T^Gi_{M'}^G\Omega'}\bk{\lambda_{a.d.}} = \# \Stab_W\bk{\lambda_{a.d.}} .
	\]
	In particular, assumption (2) in \Cref{Lem_RC} holds automatically for $\pi=i_M^G\Omega$, $\sigma=i_{M'}^G\Omega'$ and $\Pi=i_T^G\bk{r_T^M\Omega}$.
	Namely, $\pi$ and $\sigma$ share a common (spherical) subquotient.
\end{Remark}

For a given non-regular representation $I_P\bk{\chi,s}$, let $v_{P,\chi,s}$ denote the set of all $w\cdot \chi_{M,s}$, for $w\in W$ such that $\Real\bk{w\cdot \chi_{M,s}}$ is anti-dominant.
We call the elements of $v_{P,\chi,s}$ the \emph{anti-dominant exponents of $I_P\bk{\chi,s}$}.

\begin{framed}
	\textbf{Implementing the reducibility criterion for non-regular points:}
	
	We implement the reducibility test in two steps:
	\begin{enumerate}
		\item 
		
		Consider two triples $\bk{P,\chi.s}$ and $\bk{P',\chi',s}$ such that $I_P\bk{\chi,s}$ and $I_{P'}\bk{\chi',s'}$ share an anti-dominant exponent $\lambda_{a.d.}$.
		In order to show that $\pi$ is reducible, it will be enough to show that $\pi=I_P\bk{\chi,s}$, $\sigma=I_{P'}\bk{\chi',s'}$ and $\Pi=\Ind_B^G\chi_{M,s}$ satisfy the conditions in (\textbf{RC}).
		Conditions (1) and (2) automatically follow from \Cref{Eq_I_P_injects_to_I_B} and \Cref{Lem_mult_of_antidominant_exponent}.
		Condition (3) can be verified using the following methods:
		\begin{itemize}
			\item If $\#W\bk{G,P}>\#W\bk{G,P'}$, (3) holds.
			\item If $mult_{\pi}\bk{\chi_{M,s}}>mult_{\sigma}\bk{\chi_{M,s}}$, (3) holds.
			\item If $\suml_{W^{M,T}} w\circ\chi_{M,s} \not\leq \suml_{W^{M',T}} w\circ\chi'_{M',s'}$, (3) holds.
		\end{itemize}

		We note here that these tests could be performed to check the reducibility of $I_P\bk{\chi,s}$ and $I_{P'}\bk{\chi',s'}$ simultaneously.
		Also, we have performed these test at this order, as each test requires more computations than the previous one.
		
		\item For triples $\bk{P,\chi.s}$ such that reducibility could not be verified using the previous comparisons and irreducibility could not be verified either (see \Cref{Subsec:Irr_Test}), we performed the following reducibility test.
		
		Given $\lambda\in W\cdot\chi_{M,s}$ and
		\[
		\Theta = \set{\alpha\in\Delta \mvert \gen{\lambda,\check{\alpha}} = -1},
		\]
		$\lambda=r_T^{M_\Theta} \Omega'$, with $\Omega'\in\bfX\bk{M_{\Theta}}$.
		If the conditions of (\textbf{RC}) hold with respect to $\pi=I_P\bk{\chi,s}$, $\sigma=i_{M_\Theta}^G\Omega'$ and $\Pi=\Ind_B^G\chi_{M,s}$, then it follows that $\pi$ is reducible.
	
		We note here, that usually, in this case, there is a maximal Levi subgroup $M'$ of $G$ such that $i_{M_\Theta}^{M'}$ is an irreducible degenerate principal series of $M'$.
	\end{enumerate}

\end{framed}

\begin{Remark}
	For computational reasons, we applied step (2) only after the irreducibility test introduced in \Cref{Subsec:Irr_Test}.
\end{Remark}


\subsection{Part \rom{3} - Irreducibility Test}
\label{Subsec:Irr_Test}
%


After applying the reducibility test (1) explained above, one is left with a list of triples $\bk{P,\chi,s}$ such that $I_P\bk{\chi,s}$ is expected to be irreducible.
We now explain the method we used to test irreducibility of such representations.

The main idea we use to show that $\pi=I_P\bk{\chi,s}$ is irreducible goes as follows:
\begin{itemize}
	\item Let $v\leq r_T^G\pi$ be an exponent such that $\Real\bk{v}$ is anti-dominant and let $\pi'\leq \pi$ be an irreducible representation such that $v\leq r_T^G\pi'$.
	\item The Jacquet functor is exact and hence $r_T^G\pi'\leq r_T^G\pi$.
	\item If one can show that $r_T^G\pi'=r_T^G\pi$ (e.g. by showing that $\dim_\C r_T^G\pi'\geq \# W\bk{G,P} = \dim_\C r_T^G\pi$), then $\pi'=\pi$ is irreducible.
\end{itemize}

In order to prove that $r_T^G\pi'=r_T^G\pi$ we use \emph{branching rules}.
\\

\paragraph{\textbf{Branching Rules:}}
Let $L$ be a Levi subgroup of $G$ and $\lambda\leq r_T^G\pi'$ such that there exists a \underline{unique} irreducible representation $\sigma$ of $L$ such that $\lambda\leq r_T^L\sigma$.
Then the following hold:
\begin{itemize}
	\item The multiplicity $mult_{r_T^L\sigma}\bk{\lambda}$ divides the multiplicity $mult_{r_T^G\pi'}\bk{\lambda}$.
	\item If $n=\frac{mult_{r_T^G\pi'}\bk{\lambda}}{mult_{r_T^L\sigma}\bk{\lambda}}$ then $n\times \coset{r_T^L\sigma}\leq \coset{r_T^G\pi'}$.
\end{itemize}

\begin{proof}
	Indeed, write
	\[
	\coset{r_L^G\pi'} = \sum_{i=1}^{k} n_i\times\coset{\sigma_i},
	\]
	where $\sigma_i$ are disjoint irreducible representations of $L$.
	Since $r_T^G=r_T^Lr_L^G$ it follows that
	\[
	\coset{\lambda}\leq \coset{r_T^G\pi'} = \sum_{i=1}^{k} n_i \times \coset{r_T^L\sigma_i},
	\]
	Without any loss of generality, $\lambda\leq r_T^L\sigma_1$.
	Since $\sigma$ is the unique irreducible representation of $L$ such that $\lambda\leq r_T^L\sigma$ then:
	\begin{itemize}
		\item $\sigma_1=\sigma$.
		\item $\lambda\not\leq r_T^L\sigma_i$ for $i>1$.
		\item $mult_{r_T^G\pi'}\bk{\lambda}=n_1\cdot mult_{r_T^L\sigma}\bk{\lambda}$.
		\item $n_1\times\coset{r_T^L\sigma} \leq r_T^G\pi'$.
	\end{itemize}
\end{proof}

A list of branching rules, which were used by us in \Cref{Sec_degenerate_principal_series_E6}, can be found in \Cref{App:Branching_Rules}.
These branching rules are associated with Levi subgroups $M$ such that $M^{der}$ is of type $A_1$, $A_2$, $A_3$ or $D_4$. \\

\begin{framed}
	\textbf{Implementing the irreducibility criterion for non-regular points:}
	
	We start by choosing an anti-dominant exponent of $I_P\bk{\chi,s}$.
	Namely, we choose an anti-dominant $\lambda_{a.d.}$ in $r_T^G\bk{I_P\bk{\chi,s}}$.
	Also, let $\pi'\leq \pi$ be an irreducible subquotient such that $\lambda_{a.d.}\leq r_T^G\pi'$.
	It follows from \Cref{Eq:Golden_rule} that $\pi'$ is the unique subquotient of $\pi$ with that property and that $\Card{\Stab_{W}\bk{\lambda_{a.d.}}}\times\lambda_{a.d.}\leq r_T^G\pi'$.
	
	We then create a list of pairs $\coset{\lambda,n_\lambda}$, where $\lambda$ is an exponent of $I_P\bk{\chi,s}$ and $n_\lambda$ is a lower bound on the multiplicity of $\lambda$ in $\pi'$.
	For any $\lambda$ in the list we check which of the branching rules (see \Cref{App:Branching_Rules}) can be applied to $\lambda$ and update the bounds $n_\lambda$ accordingly.
	The process is terminated when either
	\[
	\sum_\lambda n_\lambda\times\coset{\lambda} =r_T^G\pi,
	\]
	in which case $\pi=\pi'$ is irreducible, or that no more branching rules can be further applied, in which case the algorithm is inconclusive.
\end{framed}

\begin{Remark}
	In \Cref{App:Example_of_Irr_Cir} we give an example of a proof of irreducibility using this algorithm.
\end{Remark}



\subsection{Part \rom{4} - Irreducible Quotients and Subrepresentations}
\label{Subsec:Subs}
We now describe the tools used to calculate the maximal semi-simple subrepresentation of $\pi=i_M^G \Omega$ and its maximal semi-simple quotient.

First, note that, by contragredience, it is enough to find its maximal semi-simple subrepresentation.
Furthermore, note the following bound on its length:
\begin{Lem}
	\label{Lem:Bound_on_length_of_ss_subrepn}
	For  $\Omega=\Omega_{M,\chi,s}$ and $\pi=i_M^G\bk{\Omega}$, the number of irreducible subrepresentations of $\pi$ is bounded by $mult_\pi\bk{\chi_{M,s}}$.
\end{Lem}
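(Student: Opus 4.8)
The plan is to exploit the embedding $\pi=I_P\bk{\chi,s}\hookrightarrow i_T^G\chi_{M,s}$ furnished by \Cref{Eq_I_P_injects_to_I_B}, together with Frobenius reciprocity, to show that the character $\chi_{M,s}$ occurs as an exponent of \emph{every} irreducible subrepresentation of $\pi$, and then to add up these contributions inside the socle. Since $\Omega=\Omega_{M,\chi,s}$ is one-dimensional, $\pi=i_M^G\Omega$ is admissible of finite length, so its socle $\operatorname{soc}\bk{\pi}$ (its maximal semi-simple subrepresentation) is a finite direct sum of irreducibles; as every irreducible subrepresentation of $\pi$ lies in $\operatorname{soc}\bk{\pi}$, it suffices to bound the length $\ell$ of the socle.

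First I would fix an arbitrary irreducible subrepresentation $\pi'\leq \pi$. Composing the inclusion $\pi'\hookrightarrow\pi$ with the embedding of \Cref{Eq_I_P_injects_to_I_B} gives a nonzero map $\pi'\hookrightarrow i_T^G\chi_{M,s}$, so $\Hom_G\bk{\pi',i_T^G\chi_{M,s}}\neq 0$. Applying Frobenius reciprocity with $L=T$ and $\sigma=\chi_{M,s}$ yields $\Hom_T\bk{r_T^G\pi',\chi_{M,s}}\neq 0$. Because $\chi_{M,s}$ is one-dimensional (hence irreducible as a $T$-representation), any nonzero such map is a surjection, so $\chi_{M,s}$ is a quotient, and in particular a subquotient, of $r_T^G\pi'$. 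Thus $mult_{\pi'}\bk{\chi_{M,s}}\geq 1$ for every irreducible subrepresentation $\pi'$ of $\pi$.

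Finally, writing $\operatorname{soc}\bk{\pi}=\bigoplus_{i=1}^{\ell}\tau_i$ as a direct sum of irreducibles, exactness of the Jacquet functor applied to $\operatorname{soc}\bk{\pi}\hookrightarrow\pi$ gives $r_T^G\operatorname{soc}\bk{\pi}\leq r_T^G\pi$, whence
\[
\ell \leq \sum_{i=1}^{\ell} mult_{\tau_i}\bk{\chi_{M,s}} = mult_{\operatorname{soc}\bk{\pi}}\bk{\chi_{M,s}} \leq mult_\pi\bk{\chi_{M,s}},
\]
where the first inequality uses the previous step. This is precisely the asserted bound. The argument is essentially formal once the embedding into the full principal series is available; the only point requiring care is the passage from a nonzero map into $i_T^G\chi_{M,s}$ to the occurrence of $\chi_{M,s}$ as an exponent, which is exactly where the one-dimensionality of $\chi_{M,s}$ is used, and I do not anticipate a genuine obstacle there.
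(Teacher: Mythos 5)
Your argument is correct and follows essentially the same route as the paper: embed $\pi'\hookrightarrow i_T^G\chi_{M,s}$ via \Cref{Eq_I_P_injects_to_I_B}, apply Frobenius reciprocity to deduce $\chi_{M,s}\leq r_T^G\pi'$ for every irreducible subrepresentation $\pi'$, and count. You merely spell out two steps the paper leaves implicit (why nonvanishing of the Hom space forces $\chi_{M,s}$ to be an exponent, and the final summation over the socle using exactness of the Jacquet functor), which is fine.
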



\begin{proof}
	We prove that for any irreducible $\pi'$ such that
	\[
	\pi'\hookrightarrow\pi = i_M^G\Omega
	\]
	it holds that $\chi_{M,s}\leq r_T^G\pi'$.
	Note that
	\[
	\pi'\hookrightarrow i_M^G\Omega \hookrightarrow i_T^G \chi_{M,s}.
	\]
	Hence, $\pi'$ is a subrepresentation of $i_T^G \chi_{M,s}$.
	
	On the hand, by Frobenius reciprocity,
	\[
	\Hom_G\bk{\pi',i_T^G \chi_{M,s}} \cong \Hom_T\bk{r_T^G\pi',\chi_{M,s}} .
	\]
	Since the left-hand side is non-trivial, so is the right-hand side.
	It follows that $\chi_{M,s}\leq r_T^G\pi'$.
	The claim then follows.
	
\end{proof}

\begin{Cor}
	\label{Lemma:Quotients_of_Regular_Induction}
	If $\Omega=\Omega_{M,\chi,s}$ is regular then $\pi=i_M^G\bk{\Omega}$ admits a unique irreducible quotient and a unique irreducible subrepresentation.
	Each of them has multiplicity one in $\pi$.
\end{Cor}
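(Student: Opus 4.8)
The plan is to read off the unique irreducible subrepresentation from a multiplicity count of the leading exponent $\chi_{M,s}$, and then to transfer the statement to quotients by duality. First I would compute $r_T^G\pi$ via the Geometric Lemma with $L=T$. Since $\Phi_T^+=\emptyset$, for every $w\in W^{M,T}$ one has $M'=M\cap w^{-1}\bk{T}=T$ and $L'=w\bk{M}\cap T=T$, so the $w$-summand collapses to the single character $w\bk{\chi_{M,s}}$, and
\[
\coset{r_T^G\pi}=\sum_{w\in W^{M,T}}\coset{w\bk{\chi_{M,s}}}.
\]
Now I would invoke regularity: for $w_1,w_2\in W^{M,T}$ the equality $w_1\bk{\chi_{M,s}}=w_2\bk{\chi_{M,s}}$ forces $w_2^{-1}w_1\in\Stab_W\bk{\chi_{M,s}}=\set{e}$, whence $w_1=w_2$. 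Thus these exponents are pairwise distinct, $r_T^G\pi$ is multiplicity-free, and since $e\in W^{M,T}$ produces the exponent $\chi_{M,s}$ we obtain $mult_\pi\bk{\chi_{M,s}}=1$. (Equivalently, regularity makes every exponent appear once, in accordance with \Cref{Lem_mult_of_antidominant_exponent} for the anti-dominant representative, whose stabilizer is again trivial.)

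With the count in hand, \Cref{Lem:Bound_on_length_of_ss_subrepn} bounds the number of irreducible subrepresentations of $\pi$ by $mult_\pi\bk{\chi_{M,s}}=1$. As $\pi$ is a nonzero representation of finite length, it has at least one irreducible subrepresentation, so it has exactly one, say $\pi'$. The proof of \Cref{Lem:Bound_on_length_of_ss_subrepn} yields $\chi_{M,s}\leq r_T^G\pi'$, and since $r_T^G$ is exact,
\[
1=mult_\pi\bk{\chi_{M,s}}\geq mult_\pi\bk{\pi'}\cdot mult_{\pi'}\bk{\chi_{M,s}}\geq mult_\pi\bk{\pi'},
\]
so $\pi'$ occurs with multiplicity one in $\pi$.

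For the quotient I would pass to the contragredient, as indicated at the start of \Cref{Subsec:Subs}: irreducible quotients of $\pi$ correspond to irreducible subrepresentations of $\pi^\vee$, and $\pi^\vee\cong i_M^G\bk{\Omega^\vee}=I_P\bk{-\chi,-s}$. The point to verify is that $\pi^\vee$ is again regular. Its leading exponent is $r_T^M\bk{\Omega^\vee}=-\iota_M\bk{\Omega}-\rho_{B\cap M}^M$; writing $w_0^M$ for the longest element of $W_M$ and using that $\iota_M\bk{\Omega}$ is $W_M$-invariant while $w_0^M\bk{\rho_{B\cap M}^M}=-\rho_{B\cap M}^M$, one computes $r_T^M\bk{\Omega^\vee}=w_0^M\bk{-\chi_{M,s}}$. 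Hence $\Stab_W\bk{r_T^M\bk{\Omega^\vee}}$ is $W$-conjugate to $\Stab_W\bk{-\chi_{M,s}}=\Stab_W\bk{\chi_{M,s}}=\set{e}$, so $\pi^\vee$ is regular. Applying the previous two paragraphs to $\pi^\vee$ gives it a unique irreducible subrepresentation of multiplicity one, and dualizing back yields a unique irreducible quotient of multiplicity one in $\pi$.

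The multiplicity count is routine once the Geometric Lemma is in place; the only genuinely delicate step is the last one, confirming that regularity is preserved under contragredience. The subtlety is that the exponents of $\pi^\vee$ are \emph{not} literally the term-by-term negatives of those of $\pi$ — they differ by the shift $2\rho_{B\cap M}^M$ coming from the Jacquet normalization — so one must observe that this shift is precisely accounted for by $w_0^M\in W_M$, keeping the leading exponent of $\pi^\vee$ inside the $W$-orbit of $-\chi_{M,s}$ and thereby preserving the triviality of its stabilizer.
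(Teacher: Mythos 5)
Your argument is correct and follows exactly the route the paper intends: the paper states this as an unproved corollary of \Cref{Lem:Bound_on_length_of_ss_subrepn}, the point being that regularity forces the exponents $w\bk{\chi_{M,s}}$, $w\in W^{M,T}$, to be pairwise distinct, so $mult_\pi\bk{\chi_{M,s}}=1$ and the lemma bounds the number of irreducible subrepresentations by one, with the quotient statement obtained by contragredience. Your extra verification that $\pi^\vee$ is again regular (the leading exponent of $\pi^\vee$ being $w_0^M\bk{-\chi_{M,s}}$, hence with trivial stabilizer) is a detail the paper glosses over but is exactly the right thing to check.
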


We now describe the various methods used by us for calculating the maximal semi-simple subrepresentation of $\pi$. \\

\underline{\textbf{Case I:}}
If $mult_\pi\bk{\chi_{M,s}}=1$, then $\pi=i_M^G\bk{\Omega}$ admits a unique irreducible subrepresentation.
This subrepresentation, $\pi_0$, appears with multiplicity $1$ in $\pi$ as $mult_\pi\bk{\chi_{M,s}} = mult_{\pi_0}\bk{\chi_{M,s}}$.

\begin{framed}
	\textbf{Implementing Case I:}
	This only requires calculating $mult_\pi\bk{\chi_{M,s}}$ and show that it is $1$.
	If that is not the case, we move to the next cases.
\end{framed}

\begin{Remark}
	We note here that in all of the relevant cases, if $s>0$, then $mult_\pi\bk{\chi_{M,s}}=1$ so $I_P\bk{\chi,s}$ admits a unique irreducible subrepresentation.
	This, in fact, follows directly from \cite[Theorem 6.3]{MR2490651}.
	
	It is thus enough, to consider $s\leq 0$.
\end{Remark}

\underline{\textbf{Case II:}}
Assume that $m=mult_{\pi}\bk{\chi_{M,s}}$ and that, using a sequence of branching rules, one can show that there exists an irreducible representation $\pi_0\leq\pi$ of $G$ such that $m\times\coset{\chi_{M,s}} \leq \coset{r_T^G\pi_0}$.
Then, $\pi=i_M^G\bk{\Omega}$ admits a unique irreducible subrepresentation.
This subrepresentation, $\pi_0$, appears with multiplicity $1$ in $\pi$ as $mult_\pi\bk{\chi_{M,s}} = mult_{\pi_0}\bk{\chi_{M,s}}$.

\begin{framed}
	\textbf{Implementing Case II:}
	This calculation follows the ideas of \Cref{Subsec:Irr_Test}.
	Here we do not aim to show that $\sum_\lambda n_\lambda\times\coset{\lambda} =\coset{r_T^G\pi}$; rather we aim to show that $n_{\chi_{M,s}}=mult_\pi\bk{\chi_{M,s}}$.
	
	As in the irreducibility criterion, we start with an anti-dominant exponent $\lambda_{a.d.}\leq r_T^G\pi$.
	Since we assume that $s\leq 0$, there is a subrepresentation $\pi_0$ of $\pi$ such that $\lambda_{a.d.}\leq r_T^G \pi_0$.
	
	We then start to apply branching rules on $\lambda_{a.d.}$.	
	This computation would terminate if it found that $n_{\chi_{M,s}}=mult_\pi\bk{\chi_{M,s}}$, in which case $\pi_0$ is the unique irreducible subrepresentation of $\pi$, or if no more branching rules can be further applied, in which case this test is inconclusive.
\end{framed}

%
%
%
%
%

We now describe one more possible case, which occurred in the course of this work.
Due to the scarcity of its use, we did not implement it as part of our algorithm.
The triples $\bk{M,s,\chi}$ in which these cases were relevant will be further discussed in \Cref{Sec_degenerate_principal_series_E6}.


\underline{\textbf{Case III:}}
Let:
\begin{itemize}
	\item $M'$ be a Levi subgroup of $G$.
	\item $M'' = M \cap M'$.
	\item $\Omega''=r_{M''}^M\Omega$
	\item Assume that $i_{M''}^{M'}\Omega'' = \oplus \sigma_i$ is semi-simple.
\end{itemize}

By induction in stage, it follows that
\begin{align*}
i_M^G\Omega 
&\hookrightarrow i_{M}^G \bk{i_{M''}^M \Omega''} \\
&\cong i_{M''}^G\Omega'' \\
&\cong i_{M'}^G \bk{i_{M''}^{M'}\Omega''} = \oplus i_{M'}^G \sigma_i
\end{align*}

Furthermore, assume that each of the $i_{M'}^G \sigma_i$ admits a unique irreducible subrepresentation $\pi_i$.
It follows that the maximal semi-simple subrepresentation of $\pi$ is a subrepresentation of $\oplus \pi_i$.
It remains to determine which of the $\pi_i$-s is a subrepresentation of $\pi$ and the equivalencies between them.
Both of these can often be done by a comparison of Jacquet modules and multiplicities of certain exponents in them.

We finish this section by noting a useful fact, which other sources sometimes refer to as a \textbf{central character argument}.
This is, in a sense, a counterpart of \Cref{Lem:Bound_on_length_of_ss_subrepn}.

\begin{Lem}
	\label{Central_character_argument}
	Let $\pi'$ be a smooth irreducible representation of $G$.
	If $\lambda\leq r_T^G\pi'$ then $\pi'\hookrightarrow i_T^G\lambda$.
\end{Lem}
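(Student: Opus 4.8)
The plan is to deduce the statement from Frobenius reciprocity, the only subtlety being the passage from ``$\lambda$ is a subquotient'' to ``$\lambda$ is a quotient'' of the Jacquet module. The starting point is the adjunction recalled in the preliminaries: with $\pi=\pi'$, $M=T$ and $\sigma=\lambda$ it gives
\[
\Hom_G\bk{\pi',i_T^G\lambda} \cong \Hom_T\bk{r_T^G\pi',\lambda}.
\]
Thus it suffices to produce a nonzero $T$-homomorphism $r_T^G\pi'\twoheadrightarrow\lambda$, i.e.\ to show that $\lambda$ occurs as a \emph{quotient} of $r_T^G\pi'$, not merely as a Jordan--H\"older constituent. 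Once this is done, any nonzero element of $\Hom_G\bk{\pi',i_T^G\lambda}$ is automatically injective because $\pi'$ is irreducible, yielding the desired embedding $\pi'\hookrightarrow i_T^G\lambda$.

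The step that requires care is therefore showing that every constituent of $r_T^G\pi'$ is in fact a quotient. First I would note that $r_T^G\pi'$ is a smooth $T$-representation of finite length, since $\pi'$ is admissible and the Jacquet functor preserves finite length. The key point is then structural: $T$ is a torus, so restriction to its maximal compact subgroup $T^0$ decomposes any smooth representation into $T^0$-isotypic pieces, and each piece is a finite-length module over the group algebra of the free abelian group $T/T^0$, i.e.\ over a Laurent polynomial ring. Such a module is Artinian and hence splits as the direct sum of its localizations at the finitely many maximal ideals in its support; on the summand supported at the maximal ideal corresponding to $\lambda$, every composition factor is isomorphic to $\lambda$, and by Nakayama this summand surjects onto $\lambda$. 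Consequently $\lambda\leq r_T^G\pi'$ forces $\Hom_T\bk{r_T^G\pi',\lambda}\neq 0$.

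Combining the two observations completes the argument: $\lambda\leq r_T^G\pi'$ gives a surjection $r_T^G\pi'\twoheadrightarrow\lambda$, Frobenius reciprocity upgrades it to a nonzero map $\pi'\to i_T^G\lambda$, and irreducibility of $\pi'$ makes that map an embedding. The main obstacle is precisely the sub-versus-quotient distinction flagged above: a composition factor of a Jacquet module need not be a quotient for a general reductive group, and it is the abelian (torus) structure of $T$---concretely the semisimplicity of the $T^0$-action together with the Artinian decomposition over the Laurent ring---that rescues the statement. Everything else is formal.
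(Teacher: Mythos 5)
Your proof is correct and follows essentially the same route as the paper: Frobenius reciprocity reduces the claim to showing $\Hom_T\bk{r_T^G\pi',\lambda}\neq 0$, and this is obtained by decomposing the Jacquet module into generalized eigenspaces for $T$ and observing that the $\lambda$-block surjects onto $\lambda$. The only difference is cosmetic: where you phrase the decomposition in terms of localizations of a finite-length module over the Laurent ring $\C[T/T^0]$ plus Nakayama, the paper invokes Casselman's Proposition 2.1.9 for the same direct sum decomposition $V=\bigoplus_\omega V_{\omega,\infty}$.
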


\begin{proof}
	Let $\rho=r_T^G\pi'$ and for any $\omega:T\to\C^\times$ let
	\[
	V_{\omega,n} = \set{v\in V \mvert \bk{\rho\bk{t}-\omega\bk{t}}^nv = 0\ \forall t\in T},
	\]
	where $V$ denotes the underlying (finite dimensional) vector space of $\rho$.
	Furthermore, let
	\[
	V_{\omega,\infty} = \cup_{n\geq 1} V_{\omega,n},\quad V_\omega=V_{\omega,1} .
	\]
	From \cite[Proposition 2.1.9]{Casselman} (note that $Z_T=T$) it follows that
	\begin{equation}
	\label{Eq:decomposition_along_central_characters}
	V=\bigoplus_{\omega:T\to\C^\times} V_{\omega,\infty},
	\end{equation}
	where $V_{\omega,\infty}\neq \set{0}$ for only finitely many $\omega$.
	Namely,
	\[
	\rho = \bigoplus_{i\in I} V_{\omega_i,\infty},
	\]
	for a finite set $I$.
	
	By Frobenius reciprocity,
	\[
	\Hom_{G}\bk{\pi',i_T^G\lambda} \cong \Hom_{T}\bk{r_T^G\pi',\lambda} = \Hom_{T}\bk{\rho,\lambda}
	\]	
	Since $\lambda\leq \rho$, it follows that $V_{\lambda}\neq\set{0}$.
	By \Cref{Eq:decomposition_along_central_characters},
	\[
	\Hom_{T}\bk{r_T^G\pi,\lambda} \cong \bigoplus_{i\in I} \Hom_{T}\bk{\rho_{i,\infty},\lambda }\cong  \Hom_{T}\bk{\rho_{\lambda,\infty},\lambda}\neq\set{0} .
	\]
	
	We conclude that $\Hom_{G}\bk{\pi,i_T^G\lambda}\neq \set{0}$ and hence
	\[
	\pi\hookrightarrow i_T^G\lambda .
	\]	

\end{proof}

\section{Degenerate Principal Series Representations of $E_6$}
\label{Sec_degenerate_principal_series_E6}

Let $G$ be the simple, split and simply-connected $F$-group of type $E_6$.
In this section, we describe the structure of the degenerate principal series of $G$ using the algorithm described in \Cref{Sec_algorithm}.
Namely, we determine the following:
\begin{itemize}
	\item All regular reducible degenerate principal series $I_P\bk{\chi,s}$ of $G$.
	\item All non-regular degenerate principal series $I_P\bk{\chi,s}$ of $G$.
	\item For each non-regular $I_P\bk{\chi,s}$, we determine whether it is reducible or irreducible.
	\item For reducible $I_P\bk{\chi,s}$ we determine its maximal semi-simple subrepresentation and quotient.
	In fact, we show that, with only one exception, all $I_P\bk{\chi,s}$ admit a unique irreducible quotient and a unique irreducible subrepresentation.
\end{itemize}

\subsection{The Exceptional Group of Type $E_6$}

We start by describing the structure of $G$.
We consider $G$ as a simply-connected Chevalley group of type $E_6$ (see \cite[pg. 21]{MR0466335}).
Namely, we fix a Borel subgroup $B$ whose Levi subgroup is a torus $T$.
This gives rise to a set of 72 roots $\Phi$, containing a set of simple roots $\Delta=\set{\alpha_1,\alpha_2,\alpha_3,\alpha_4,\alpha_5,\alpha_6}$.
The Dynkin diagram of type $E_6$ is
\begin{center}
	\begin{tikzpicture}
	\dynkin[scale=2]{E}{6}
	\rootlabel{1}{\alpha 1}
	\rootlabel{2}{\alpha 2}
	\rootlabel{3}{\alpha 3}
	\rootlabel{4}{\alpha 4}
	\rootlabel{5}{\alpha 5}
	\rootlabel{6}{\alpha 6}
	\end{tikzpicture}
\end{center}
The group $G$ is generated by the symbols
\[
\set{x_\alpha\bk{r} \mvert \alpha\in\Phi \ r\in F}
\]
subject to the Chevalley-Steinberg relations (see \cite[pg. 66]{MR0466335}).

We record the isomorphism classes of standard Levi subgroups $M_\Theta$ of $G$, with $\Theta$ inducing a connected sub-Dynkin diagram, in the following lemma.
\begin{Lem}
	Let $\Theta\subset\Delta$ induce a connected sub-diagram.
	Then, $\Theta$ is either $A_n$ (with $1\leq n\leq 5$), $D_4$ or $D_5$.
	Furthermore, $M_\Theta$ is classified as follows:
	\begin{itemize}
		\item If $\Theta$ is of type $A_n$, with $n\leq 4$, then $M_\Theta \cong GL_{n+1}\times GL_1^{5-n}$.
		
		\item If $\Theta$ is of type $D_5$, then $M_\Theta\cong GSpin_{10}$.
		
		\item If $\Theta$ is of type $D_4$, then $M_\Theta\cong GL_1\times GSpin_{8}$.
		
		\item If $\Theta$ is of type $A_5$, then $M_\Theta\cong \mu_2\lmod \bk{SL_6\times GL_1}$ is an algebraic triple cover of $GL_6$.
	\end{itemize}
\todonum{Maybe add $M_i$ for $i=3,4,5$? Also for other $M_i$, state explicitly that this is the Levi subgroup treated.}
\end{Lem}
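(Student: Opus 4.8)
The plan is to separate the purely combinatorial classification of the possible types of $\Theta$ from the determination of the isogeny type of each $M_\Theta$, treating the latter through the root datum of $M_\Theta$ sitting inside that of $G$.

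First I would enumerate the connected proper subdiagrams of the $E_6$ Dynkin diagram by direct inspection. Writing $\alpha_1-\alpha_3-\alpha_4-\alpha_5-\alpha_6$ for the chain and $\alpha_2$ for the branch node attached to $\alpha_4$, a connected subset $\Theta\subsetneq\Delta$ either avoids $\alpha_2$, in which case it is a subinterval of the chain and hence of type $A_n$ with $1\le n\le 5$, or it contains $\alpha_2$, in which case it must also contain $\alpha_4$ (the unique neighbour of $\alpha_2$). In the latter situation the possibilities are $\{\alpha_2,\alpha_4\}$ together with its extensions along a single arm (type $A_n$), the star $\{\alpha_2,\alpha_3,\alpha_4,\alpha_5\}$ (type $D_4$), or that star with one further node $\alpha_1$ or $\alpha_6$ adjoined (type $D_5$). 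This yields exactly the types $A_1,\dots,A_5,D_4,D_5$.

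Next I would identify the derived group. Since $G$ is simply connected, $X_\ast\bk{T}$ equals the coroot lattice of $E_6$, which has $\set{\check{\alpha}:\alpha\in\Delta}$ as a $\Z$-basis. For $\Theta\subset\Delta$ the coroots $\set{\check{\alpha}:\alpha\in\Theta}$ form part of this basis, so they span a saturated sublattice of $X_\ast\bk{T}$. As the cocharacter lattice of the maximal torus of $M_\Theta^{der}$ is $X_\ast\bk{T}\cap\bk{\Q\text{-span of }\set{\check{\alpha}:\alpha\in\Theta}}$, saturatedness forces it to equal the coroot lattice of $\Phi_M$; hence $M_\Theta^{der}$ is simply connected of type $\Theta$, i.e. $SL_{n+1}$, $Spin_8$ or $Spin_{10}$. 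The connected centre $Z\bk{M_\Theta}^\circ$ is then a torus of dimension $6-\Card{\Theta}$, accounting for the $GL_1$ factors, and the multiplication map $M_\Theta^{der}\times Z\bk{M_\Theta}^\circ\to M_\Theta$ is a central isogeny with kernel the finite group $Z\bk{M_\Theta^{der}}\cap Z\bk{M_\Theta}^\circ$.

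The main work, and the step I expect to be the real obstacle, is pinning down this finite kernel — equivalently, the precise gluing of $M_\Theta^{der}$ to its connected centre — in the cases where it is nontrivial. I would realise $Z\bk{M_\Theta}^\circ$ as the subtorus cut out by $\gen{\alpha,\cdot}=0$ for $\alpha\in\Theta$, describe $Z\bk{M_\Theta^{der}}$ as the coweight lattice of $\Phi_M$ modulo its coroot lattice (realised by cocharacters into $T$), and compute their intersection using an explicit model of the weight and coweight lattices of $E_6$. For $\Theta$ of type $A_n$ with $n\le 4$ this recovers the standard presentation $GL_{n+1}=\bk{SL_{n+1}\times GL_1}/\mu_{n+1}$, giving $M_\Theta\cong GL_{n+1}\times GL_1^{5-n}$; for type $D_5$ it must produce exactly the defining gluing of $GSpin_{10}$ rather than $Spin_{10}\times GL_1$ or an $SO_{10}$-variant; and for type $D_4$ one must additionally exhibit a primitive cocharacter splitting off a direct $GL_1$ factor and leaving $GSpin_8$. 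The most delicate identification is the $A_5$ case, where the crux is to show that $Z\bk{SL_6}\cap Z\bk{M_\Theta}^\circ=\mu_2$ rather than the full $\mu_6$, so that $M_\Theta\cong\bk{SL_6\times GL_1}/\mu_2$ is a triple, not full, cover of $GL_6$; this is consistent with, and ultimately reflects, the order-three centre of the simply-connected group of type $E_6$.
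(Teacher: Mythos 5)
Your proposal is correct in substance but follows a genuinely different route from the paper. The paper disposes of the lemma almost entirely by reduction and citation: the $D_5$ case is identified with the Levi of the parabolic of $E_6$ with abelian nilradical, whose structure ($GSpin_{10}$) is taken from the literature; the $D_4$ and $A_n$ ($n\leq 4$) cases are then handled by observing that each such $\Theta$ sits inside a sub-diagram of type $D_5$, so $M_\Theta$ is a Levi subgroup of $GSpin_{10}$ and its isomorphism type can be read off from the known classification of Levi subgroups of $GSpin$ groups; only the $A_5$ case is left to a ``direct calculation.'' You instead propose a uniform computation inside the root datum of $G$: connected subdiagrams by inspection, $M_\Theta^{der}$ simply connected because the simple coroots form part of a $\Z$-basis of $X_\ast\bk{T}$ (so the relevant sublattice is saturated), and then the finite gluing group $Z\bk{M_\Theta^{der}}\cap Z\bk{M_\Theta}^\circ$ computed case by case. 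Your approach buys self-containedness and makes transparent exactly why, say, the $A_5$ Levi is a triple rather than full cover of $GL_6$ (your remark tying this to the $\mu_3$ centre of $E_6^{sc}$ is the right heuristic); the paper's approach buys brevity and avoids the delicate lattice intersections except in the one case where they cannot be outsourced. The only caveat is that your writeup describes the decisive kernel computations and states what they ``must'' yield rather than executing them; since that is where all the content of the lemma lives, a complete proof would have to carry out at least the $D_5$, $D_4$ and $A_5$ intersections explicitly --- though in fairness the paper's own proof is no more explicit at that point.
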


\begin{proof}
	If $\Theta$ is of type $D_5$, then this is the Levi subgroup of the parabolic subgroup of $G$ whose nilradical is Abelian considered in \cite{MR1993361}.
	If $\Theta$ is of type $D_4$ or $A_n$ with $n\leq 4$, this follows from \cite{MR1913914} together with the fact that $\Theta$ is contained in a sub-Dynkin diagram of type $D_5$.
	If $\Theta$ is of type $A_5$ this follows from a direct calculation.
\end{proof}

Let $P_i$ denote the maximal standard parabolic subgroup associated to $\Theta_i=\Delta\setminus\set{\alpha_i}$.
Let $M_i=M_{\alpha_i}=M_{\Theta_i}$ denote the Levi subgroup of $P_i$.

Let $W$ denote the Weyl group of $G$ associated with $T$.
It holds that $\Card{W} = 51,840$.
The Weyl group $W_{M_i}$ of the maximal Levi subgroup $M_i$ has the following cardinality
\[
\Card{W_{M_i}} = \piece{
	1,920,& \text{if } i=1,6 \\
	720,&  \text{if } i=2 \\
	240,& \text{if } i=3,5 \\
	72,& \text{if } i=4 } .
\]
Hence, the set $W^{M_i,T}$ of minimal representatives of $W\rmod W_{M_i}$ has the following cardinality
\[
\Card{W^{M_i,T}} = \Card{W\rmod W_{M_i}}
= \frac{\Card{W}}{\Card{W_{M_i}}} 
= \piece{
	27,& \text{if } i=1,6 \\
	72,&  \text{if } i=2 \\
	216,& \text{if } i=3,5 \\
	720,& \text{if } i=4 }.
\]
We note here that for a 1-dimensional representation $\Omega$ of $M_i$, $r_T^Gi_{M_i}^G\Omega$ has dimension $\Card{W^{M_i,T}}$.

We also note here that any $\lambda\in \bfX\bk{T}$ can be written as the following combination
\[
\suml_{i=1}^6 \Omega_i\circ\omega_{\alpha_i}.
\]
As a shorthand, we will write
\begin{equation}
\esixchar{\Omega_1}{\Omega_2}{\Omega_3}{\Omega_4}{\Omega_5}{\Omega_6}
=\suml_{i=1}^6 \Omega_i\circ\omega_{\alpha_i}.
\end{equation}
Also, let $\esixcharclass{\Omega_1}{\Omega_2}{\Omega_3}{\Omega_4}{\Omega_5}{\Omega_6}$ denote the class in $\mathfrak{R}\bk{T}$ of $\esixchar{\Omega_1}{\Omega_2}{\Omega_3}{\Omega_4}{\Omega_5}{\Omega_6}$.

\subsection{The Degenerate Principal Series of $E_6$}

Before stating the results, we make a few general comments.
\begin{itemize}
	\item By contragredience, it is enough to state the results for $s\leq 0$.
	
	\item For short, we write $\coset{i,s,k}$ for $I_P\bk{\chi,s}$ with $P=P_i$ and $\chi$ of order $k$.
	
	\item By the action of the outer automorphism group ,$\Z\rmod2\Z$ of $Dyn\bk{E_6}$, it is enough to consider the parabolic subgroups $P_1$, $P_2$, $P_3$ and $P_4$.
	The results for $P_5$ and $P_6$ are similar to those of $P_1$ and $P_3$ respectively.
\end{itemize}

\begin{Thm}
	\label{Thm:Deg_PS_of_E6}
	For any $1\leq i \leq 4$, all reducible regular $I_P\bk{\chi,s}$ and all non-regular $I_P\bk{\chi,s}$ are given in the following tables.

	
	\begin{itemize}
		\item For $P=P_1$ and $P=P_6$:

		\begin{table}[H]
			\begin{tabular}{|c|c|c|c|c|c|c|c|}
				\hline
				\diagbox{$ord\bk{\chi}$}{$s$} & $-6$ & $-5$ & $-4$ & $-3$ & $-2$ & $-1$ & $0$ \\ \hline
				$1$ & \makecell{reg. \\ red.} & \makecell{non-reg. \\ irr.} & \makecell{non-reg. \\ irr.} & \makecell{non-reg. \\ red.} & \makecell{non-reg. \\ irr.} & \makecell{non-reg. \\ irr.} & \makecell{non-reg. \\ irr.} \\ \hline
			\end{tabular}
			\label{Table_Results_for_P_1}
		\end{table}
	
		\item For $P=P_2$:

		\begin{table}[H]
			\begin{tabular}{|c|c|c|c|c|c|c|c|}
				\hline
				\diagbox{$ord\bk{\chi}$}{$s$} & $-\frac{11}{2}$ & $-\frac{9}{2}$ & $-\frac{7}{2}$ & $-\frac{5}{2}$ & $-\frac{3}{2}$ & $-\frac{1}{2}$ & $0$ \\ \hline
				$1$ & \makecell{reg. \\ red.} & \makecell{non-reg. \\ irr.} & \makecell{non-reg. \\ red.} & \makecell{non-reg. \\ red.} & \makecell{non-reg. \\ irr.} & \makecell{non-reg. \\ red.} & \makecell{non-reg. \\ irr.} \\ \hline
				$2$ & \makecell{reg. \\ irr.} & \makecell{reg. \\ irr.} & \makecell{reg. \\ irr.} & \makecell{reg. \\ irr.} & \makecell{reg. \\ irr.} & \makecell{reg. \\ red.} & \makecell{non-reg. \\ irr.} \\ \hline
			\end{tabular}
			\label{Table_Results_for_P_2}
		\end{table}
	
		\item For $P=P_3$ or $P=P_5$:

		\begin{table}[H]
			\begin{tabular}{|c|c|c|c|c|c|c|c|}
				\hline
				\diagbox{$ord\bk{\chi}$}{$s$} & $-\frac{9}{2}$ & $-\frac{7}{2}$ & $-\frac{5}{2}$ & $-\frac{3}{2}$ & $-1$ & $-\frac{1}{2}$ & $0$ \\ \hline
				$1$ & \makecell{reg. \\ red.} & \makecell{non-reg. \\ red.} & \makecell{non-reg. \\ red.} & \makecell{non-reg. \\ red.} & \makecell{non-reg. \\ irr.} & \makecell{non-reg. \\ irr.} & \makecell{non-reg. \\ irr.} \\ \hline
				$2$ & \makecell{reg. \\ irr.} & \makecell{reg. \\ irr.} & \makecell{reg. \\ irr.} & \makecell{reg. \\ red.} & \makecell{non-reg. \\ irr.} & \makecell{non-reg. \\ irr.} & \makecell{non-reg. \\ irr.} \\ \hline
			\end{tabular}
			\label{Table_Results_for_P_3}
		\end{table}
	
		\item For $P=P_4$:

		\begin{table}[H]
			\begin{tabular}{|c|c|c|c|c|c|c|c|}
				\hline
				\diagbox{$ord\bk{\chi}$}{$s$} & $-\frac{7}{2}$ & $-\frac{5}{2}$ & $-\frac{3}{2}$ & $-1$ & $-\frac{1}{2}$ & $-\frac{1}{6}$ & $0$ \\ \hline
				$1$ & \makecell{reg. \\ red.} & \makecell{non-reg. \\ red.} & \makecell{non-reg. \\ red.} & \makecell{non-reg. \\ red.} & \makecell{non-reg. \\ red.} & \makecell{non-reg. \\ irr.} & \makecell{non-reg. \\ irr.} \\ \hline
				$2$ & \makecell{reg. \\ irr.} & \makecell{reg. \\ irr.} & \makecell{reg. \\ red.} & \makecell{non-reg. \\ red.} & \makecell{non-reg. \\ red.} & \makecell{reg. \\ irr.} & \makecell{non-reg. \\ irr.} \\ \hline
				$3$ & \makecell{reg. \\ irr.} & \makecell{reg. \\ irr.} & \makecell{reg. \\ irr.} & \makecell{reg. \\ irr.} & \makecell{non-reg. \\ red.} & \makecell{non-reg. \\ irr.} & \makecell{reg. \\ irr.} \\ \hline
			\end{tabular}
			\label{Table_Results_for_P_4}
		\end{table}

	\end{itemize}

	All of the above representations admit a unique irreducible subrepresentation and a unique irreducible quotient, with the exception of $\coset{4,-\frac{1}{2},3}$, in which case $I_P\bk{\chi,s}$ admits a unique irreducible quotient and a maximal semi-simple subrepresentation of the form $\sigma_1\oplus \sigma_2\oplus\sigma_3$ (where the $\sigma_i$ are inequivalent).
	
	Furthermore, for all cases, any irreducible subrepresentation or quotient of $I_P\bk{\chi,s}$ appears in $I_P\bk{\chi,s}$ with multiplicity $1$.
	
\end{Thm}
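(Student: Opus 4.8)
The plan is to prove \Cref{Thm:Deg_PS_of_E6} by a case-by-case verification organized around the four algorithmic parts described in \Cref{Sec_algorithm}, exploiting the outer automorphism of $\mathrm{Dyn}(E_6)$ to reduce to $P_1,P_2,P_3,P_4$ and contragredience to reduce to $s\le 0$. First I would settle the \emph{location} of the entries in the tables: for each maximal parabolic $P_i$ and each order $k\in\{1,2,3\}$ of $\chi$, I would use the framed procedure for computing non-regular points to list all $s\le 0$ at which $\Stab_W(\chi_{M_i,s})\ne\{e\}$. Concretely, one enumerates $s$ with $\Real\gen{\chi_{M_i,s},\check\alpha}=0$ for some $\alpha\in\Phi^+$ (the real stabilizer condition) and intersects with the divisibility constraint on $\ord(\chi)$ coming from the $\gcd$ computation $m_0=\gcd(n_\alpha)$; this pins down exactly which cells are labelled \emph{non-reg}. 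For the regular cells, I would apply \Cref{Thm:BZ-components}: reducibility is detected by checking whether some $\sigma\in BZ_{L_\alpha}(\pi)$ is reducible, which by the rank-one criterion reduces to finding $\alpha\in\Phi^+_G\setminus\Phi^+_{M_i}$ with $\gen{\chi_{M_i,s},\check\alpha}=\pm1$. This labels each regular cell \emph{red} or \emph{irr}.

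Next I would establish \emph{reducibility} of the non-regular cells marked \emph{red}. The primary tool is the reducibility criterion \Cref{Lem_RC} in the packaged form of the framed box: given a second triple $(P',\chi',s')$ sharing an anti-dominant exponent $\lambda_{a.d.}$ with $\pi=I_{P_i}(\chi,s)$, conditions (1) and (2) of \textbf{RC} hold automatically via \Cref{Eq_I_P_injects_to_I_B} and \Cref{Lem_mult_of_antidominant_exponent}, so it remains only to verify condition (3), e.g.\ by comparing $\#W(G,P)$ against $\#W(G,P')$, comparing $mult_\pi(\chi_{M,s})$ against $mult_\sigma(\chi_{M,s})$, or comparing the full Jacquet-module sums $\sum_{W^{M,T}}w\circ\chi_{M,s}$. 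For the residual red cells where no companion degenerate principal series works, I would fall back on step (2) of that box, taking $\Theta=\{\alpha\in\Delta\mid\gen{\lambda,\check\alpha}=-1\}$ and applying \textbf{RC} with $\sigma=i_{M_\Theta}^G\Omega'$.

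Then I would establish \emph{irreducibility} of the non-regular cells marked \emph{irr}, which is the delicate half. The method of \Cref{Subsec:Irr_Test} is to pick an anti-dominant exponent $\lambda_{a.d.}\le r_T^G\pi$, let $\pi'\le\pi$ be the unique irreducible subquotient containing it with $\#\Stab_W(\lambda_{a.d.})\times\lambda_{a.d.}\le r_T^G\pi'$, and then propagate lower bounds $n_\lambda$ on multiplicities in $\pi'$ through the branching rules of \Cref{App:Branching_Rules} (types $A_1,A_2,A_3,D_4$) until $\sum_\lambda n_\lambda\times\coset{\lambda}=r_T^G\pi$, forcing $\pi'=\pi$. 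The hard part will be exactly this irreducibility propagation: there is no a priori guarantee the chain of branching rules terminates successfully, the bookkeeping over the up to $720$ exponents for $P_4$ is heavy, and it is precisely here that the computation is machine-assisted in \emph{Sagemath}. I would present one representative case in detail (referring to the worked example in \Cref{App:Example_of_Irr_Cir}) and assert the rest by the same algorithm.

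Finally I would determine the socle and cosocle structure. By contragredience it suffices to find the maximal semi-simple subrepresentation, whose length is bounded by $mult_\pi(\chi_{M,s})$ via \Cref{Lem:Bound_on_length_of_ss_subrepn}. For $s>0$ one has $mult_\pi(\chi_{M,s})=1$ (by \cite[Theorem 6.3]{MR2490651}), and for the generic $s\le 0$ cell I would apply \textbf{Case I} when $mult_\pi(\chi_{M,s})=1$, giving a unique irreducible subrepresentation of multiplicity one. When the multiplicity exceeds one I would use \textbf{Case II}, running the branching-rule propagation to show the unique subrepresentation $\pi_0$ satisfies $m\times\coset{\chi_{M,s}}\le r_T^G\pi_0$. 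The sole genuinely exceptional cell is $\coset{4,-\tfrac12,3}$: here the central-character count allows a socle of length up to $3$, and I would resolve it by \textbf{Case III}, choosing a Levi $M'$ so that $i_{M''}^{M'}\Omega''=\bigoplus\sigma_i$ is semisimple of length three, deducing $\pi\hookrightarrow\bigoplus i_{M'}^G\sigma_i$, and then using \Cref{Central_character_argument} together with a comparison of Jacquet modules to identify the three inequivalent summands $\sigma_1\oplus\sigma_2\oplus\sigma_3$ in the socle while the cosocle stays irreducible.
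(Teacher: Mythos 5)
Your proposal follows the paper's own proof almost verbatim in structure: locate the non-regular cells by the stabilizer computation, settle the regular cells by \Cref{Thm:BZ-components}, prove reducibility of non-regular cells via \Cref{Lem_RC} against a companion degenerate principal series (or an induction from a corank-two Levi), prove irreducibility by propagating branching rules from an anti-dominant exponent, and compute socles via Cases I--III of \Cref{Subsec:Subs}, with $\coset{4,-\frac{1}{2},3}$ as the length-three exception. At the level of strategy there is nothing to object to, and the reliance on machine verification is shared with the paper.

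There are, however, two concrete places where the plan as written would stall. First, you assert the irreducible cells ``by the same algorithm,'' but for exactly one of them (the paper's $\coset{3,-\frac{1}{2},1}$) the branching rules of \Cref{App:Branching_Rules} do \emph{not} exhaust $r_T^G\pi$, so the termination condition $\sum_\lambda n_\lambda\times\coset{\lambda}=\coset{r_T^G\pi}$ is never reached; you acknowledge the risk of non-termination but supply no fallback. The paper closes this cell by a different argument: the branching rules do show that the subquotient $\pi_0$ containing the anti-dominant exponent also contains the leading exponents of both $\pi$ and $\widetilde{\pi}$, each of which has multiplicity one in $\pi$, so $\pi_0$ is simultaneously the unique irreducible subrepresentation and the unique irreducible quotient, forcing $\pi=\pi_0$. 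Second, you treat $\coset{4,-\frac{1}{2},3}$ as the sole cell needing Case III, but $\coset{4,-\frac{1}{2},1}$ also escapes Cases I and II: there the paper must first prove $i_{M''}^{M'}\Omega''$ is irreducible (via a semisimplicity/unitarity argument or Ban's theorem) and then use \Cref{Central_character_argument} to embed $\pi$ into $i_T^G\lambda_{a.d.}$, a standard module with a unique irreducible subrepresentation. Relatedly, for $\coset{4,-\frac{1}{2},3}$ your sketch presupposes that $i_{M''}^{M'}\Omega''$ is semisimple of length three; the actual mechanism is an $R$-group computation ($\Stab_W(\lambda_{a.d.})\cong\Z/3\Z$ acting on $i_T^{M_4}\lambda_{a.d.}$ in the sense of Keys) together with the fact that the intertwining operator carrying the leading exponent to $\lambda_{a.d.}$ is an isomorphism, which is what yields the embedding $\pi\hookrightarrow i_T^G\lambda_{a.d.}$ and hence the three inequivalent socle constituents. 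Without these inputs the length-three claim, and with it the reducibility of that cell, is not established.
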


\begin{Remark}
	We note here that our results for the parabolic subgroup $P_1$ agree with the results of \cite{MR1993361} (see page 297).
	In particular, the unique irreducible subrepresentation, $\Pi_{min.}$, of $I_{P_1}\bk{\Id,-3}$ is the minimal representation of $G$.
	By examining the anti-dominant exponent of $\Pi_{min.}$, one sees that this is also the unique irreducible subrepresentation of $I_{P_2}\bk{\Id,-\frac{7}{2}}$, $I_{P_6}\bk{\Id,-3}$ and $I_{P_4}\bk{\Id,-\frac{5}{2}}$.
	This fact gives rise to a Siegel-Weil identity between the relevant Eisenstein series at these points (see \cite[pg. 68]{HeziMScThesis}).
\end{Remark}

\begin{proof}
	
	We deal with the question of reducibility separately from the calculation of maximal semi-simple subrepresentations. \\
	
	\textbf{Reducibility}
	
	For most $I_P\bk{\chi,s}$, the algorithm provided in \Cref{Sec_algorithm} suffices to yield the results stated in the theorem.
	In particular, reducibility of regular $I_P\bk{\chi,s}$ and irreducibility of non-regular $I_P\bk{\chi,s}$ is completely determined by the tools described there.
	The branching rules used in this proof are listed in \Cref{App:Branching_Rules}.
	See \Cref{App:Example_of_Irr_Cir} for an example of a proof of the irreducibility of $I_P\bk{\chi,s}$, in the case $\coset{1,-2,1}$, using branching rules.
	
	We note that in the case $\coset{3,-\frac{3}{2},1}$, the branching rules supplied in \Cref{App:Branching_Rules} did not cover all of $r_T^G\pi$.
	One can still prove irreducibility as will be explained later.
	
	For most non-regular reducible $I_P\bk{\chi,s}$, reducibility can be determined by (\textbf{RC}) (\Cref{Lem_RC}) by setting $\Pi=i_T^G\chi_{M,s}$ and $\sigma$ being another degenerate principal series.
	In such a case, we quote one triple $\coset{i,s,k}$ relevant for the proof in the following table.
	Usually, there is more than one such triple, in which case we recorded only one.
	
	A few cases required the application of (\textbf{RC}) with respect to an induction from a non-maximal parabolic $M_\Theta$ such that $\Card{\Theta}=4$.
	In such a case, we write $\coset{\coset{i_1,i_2},\coset{s_1,s_2},\coset{k_1,k_2}}$ to represent the induction from $M=M_\Theta$ with $\Theta=\Delta\setminus\set{\alpha_{i_1},\alpha_{i_2}}$ with initial exponent
	\[
	-\bk{\suml_{j\neq i_1,i_2}\omega_{\alpha_j}} + \bk{s_1+k_1\cdot\chi}\circ\omega_{\alpha_{i_1}} + \bk{s_2+k_2\cdot\chi}\circ\omega_{\alpha_{i_2}} ,
	\]
	where $\chi$ is of order $k$.
	In this case, again, there is more than one possible choice of data $\coset{\coset{i_1,i_2},\coset{s_1,s_2},\coset{k_1,k_2}}$ that will yield a proof of reducibility; we record only one such choice.
	
	The reducibility in the remaining case, $\coset{4,-\frac{1}{2},3}$, follows from the fact that, in that case, $I_P\bk{\chi,s}$ admits a maximal semi-simple subrepresentation of length $3$ as will be shown later.

	\begin{itemize}
		\item For $P=P_1$

		\begin{table}[H]
			\begin{tabular}{|c|c|c|c|c|c|c|c|}
				\hline
				\diagbox{$ord\bk{\chi}$}{$s$} & $-6$ & $-5$ & $-4$ & $-3$ & $-2$ & $-1$ & $0$ \\ \hline
				$1$ & \notableentry & \notableentry & \notableentry & $\coset{6,-3,1}$ & \notableentry & \notableentry & \notableentry \\ \hline
			\end{tabular}
			\label{Table_Reducibility_proof_for_P_1}
		\end{table}
		
		\item For $P=P_2$

		\begin{table}[H]
			\begin{tabular}{|c|c|c|c|c|c|c|c|}
				\hline
				\diagbox{$ord\bk{\chi}$}{$s$} & $-\frac{11}{2}$ & $-\frac{9}{2}$ & $-\frac{7}{2}$ & $-\frac{5}{2}$ & $-\frac{3}{2}$ & $-\frac{1}{2}$ & $0$ \\ \hline
				$1$ & \notableentry & \notableentry & $\coset{1,-3,1}$ & $\coset{1,0,1}$ & \notableentry & $\coset{\coset{1,2},\coset{0,-1},\coset{0,0}}$ & \notableentry \\ \hline
			\end{tabular}
			\label{Table_Reducibility_proof_for_P_2}
		\end{table}
		
		\item For $P=P_3$

		\begin{table}[H]
			\begin{tabular}{|c|c|c|c|c|c|c|c|}
				\hline
				\diagbox{$ord\bk{\chi}$}{$s$} & $-\frac{9}{2}$ & $-\frac{7}{2}$ & $-\frac{5}{2}$ & $-\frac{3}{2}$ & $-1$ & $-\frac{1}{2}$ & $0$ \\ \hline
				$1$ & \notableentry & $\coset{1,-4,1}$ & $\coset{6,-1,1}$ & $\coset{2,-\frac{1}{2},1}$ & \notableentry & \notableentry & \notableentry \\ \hline
			\end{tabular}
			\label{Table_Reducibility_proof_for_P_3}
		\end{table}
		
		\item For $P=P_4$

		\begin{table}[H]
			\begin{tabular}{|c|c|c|c|c|c|c|c|}
				\hline
				\diagbox{$ord\bk{\chi}$}{$s$} & $-\frac{7}{2}$ & $-\frac{5}{2}$ & $-\frac{3}{2}$ & $-1$ & $-\frac{1}{2}$ & $-\frac{1}{6}$ & $0$ \\ \hline
				$1$ & \notableentry & $\coset{1,-3,1}$ & $\coset{2,-\frac{1}{2},1}$ & $\coset{3,0,1}$ & $\coset{\coset{3,6},\coset{\frac{1}{2},-\frac{3}{2}},\coset{0,0}}$ & \notableentry & \notableentry \\ \hline
				$2$ & \notableentry & \notableentry & \notableentry & $\coset{3,0,2}$ & $\coset{\coset{1,5},\coset{\frac{1}{2},\frac{1}{2}},\coset{1,1}}$ & \notableentry & \notableentry \\ \hline
				$3$ & \notableentry & \notableentry & \notableentry & \notableentry & $\ast$ & \notableentry & \notableentry \\ \hline
			\end{tabular}
			\label{Table_Reducibility_proof_for_P_4}
		\end{table}
		
	\end{itemize}

	It is left to explain how to prove the irreducibility of $\pi=I_P\bk{\chi,s}$ in the case $\coset{3,-\frac{1}{2},1}$.
	The anti-dominant exponent in this case is given by
	\[
	\lambda_{a.d.}=\esixchar{-1}{0}{0}{-1}{0}{0} .
	\]
	As mentioned above, applying the branching rules from \Cref{App:Branching_Rules} on exponents, starting with $\lambda_{a.d.}$, doesn't result with the full Jacquet module $r_T^G\pi$.
	Still, irreducibility follows from that calculation, as will be explained now.
	Let
	\[
	\lambda_0 = \esixchar{-1}{-1}{3}{-1}{-1}{-1}, \quad
	\lambda_1 = \esixchar{-1}{-1}{-1}{-1}{4}{-1},
	\]
	denote the leading exponents of $\pi$ and its contragredient $\widetilde{\pi}$.
	Since $mult\bk{\lambda_0,\pi}=mult\bk{\lambda_1,\pi}=1$ it follows that $\pi$ admits a unique irreducible subrepresentation and a unique irreducible quotient.
	Let $\pi_0\leq \pi$ be the irreducible subquotient of $\pi$ such that $\lambda_{a.d.}\leq r_T^G\pi_0$.
	Applying the branching rules in \Cref{App:Branching_Rules} on $\lambda_{a.d.}$ yields that $\lambda_0,\lambda_1\leq r_T^G\pi_0$.
	Hence, $\pi_0$ is both the unique irreducible subrepresentation and the unique irreducible quotient of $\pi$.
	It follows that $\pi=\pi_0$ is irreducible. \\

	\todonum{If this yields a new branching rule, helpful to calculations in $E_7$, then the new rule should be stated here.}

	\textbf{Subrepresentations}

	For all $I_P\bk{\chi,s}$ with $s> 0$ (regardless of regularity and reducibility), the fact that they admit a unique irreducible subrepresentation follows from \Cref{Lemma:Quotients_of_Regular_Induction}.
	By contragredience, $I_P\bk{\chi,s}$, with $s\leq 0$, admits a unique irreducible quotient.
	Using the same reasoning, we see that this quotient appears in $I_P\bk{\chi,s}$ with multiplicity $1$.
	
	We now treat the maximal semi-simple subrepresentation of reducible non-regular $I_P\bk{\chi,s}$, with $s\leq 0$.
	First of all, note that, by \Cref{Lemma:Quotients_of_Regular_Induction}, when $I_P\bk{\chi,s}$ is regular, it admits a unique irreducible subrepresentation.
	
	When $I_P\bk{\chi,s}$ is non-regular, the fact that $I_P\bk{\chi,s}$ admits a unique irreducible subrepresentation follows from the considerations explained in \Cref{Subsec:Subs}.
	In the following tables, we list all cases which follow from \textbf{Case I} and \textbf{Case II} or \textbf{Case III}.
	For points where we used \textbf{Case III} ($\coset{4,-\frac{1}{2},1}$ and $\coset{4,-\frac{1}{2},3}$), the argument is detailed at the end of this proof.
	
	\begin{itemize}
		\item For $P=P_1$

		\begin{table}[H]
			\begin{tabular}{|c|c|c|c|c|c|c|c|}
				\hline
				\diagbox{$ord\bk{\chi}$}{$s$} & $-6$ & $-5$ & $-4$ & $-3$ & $-2$ & $-1$ & $0$ \\ \hline
				$1$ & \notableentry & \notableentry & \notableentry & \textbf{Case I} & \notableentry & \notableentry & \notableentry \\ \hline
			\end{tabular}
			\label{Table_UIS_proof_for_P_1}
		\end{table}
		
		\item For $P=P_2$

		\begin{table}[H]
			\begin{tabular}{|c|c|c|c|c|c|c|c|}
				\hline
				\diagbox{$ord\bk{\chi}$}{$s$} & $-\frac{11}{2}$ & $-\frac{9}{2}$ & $-\frac{7}{2}$ & $-\frac{5}{2}$ & $-\frac{3}{2}$ & $-\frac{1}{2}$ & $0$ \\ \hline
				$1$ & \notableentry & \notableentry & \textbf{Case I} & \textbf{Case II} & \notableentry & \textbf{Case I} & \notableentry \\ \hline
			\end{tabular}
			\label{Table_UIS_proof_for_P_2}
		\end{table}
		
		\item For $P=P_3$

		\begin{table}[H]
			\begin{tabular}{|c|c|c|c|c|c|c|c|}
				\hline
				\diagbox{$ord\bk{\chi}$}{$s$} & $-\frac{9}{2}$ & $-\frac{7}{2}$ & $-\frac{5}{2}$ & $-\frac{3}{2}$ & $-1$ & $-\frac{1}{2}$ & $0$ \\ \hline
				$1$ & \notableentry & \textbf{Case II} & \textbf{Case II} & \textbf{Case II} & \notableentry & \notableentry & \notableentry \\ \hline
			\end{tabular}
			\label{Table_UIS_proof_for_P_3}
		\end{table}
		
		\item For $P=P_4$

		\begin{table}[H]
			\begin{tabular}{|c|c|c|c|c|c|c|c|}
				\hline
				\diagbox{$ord\bk{\chi}$}{$s$} & $-\frac{7}{2}$ & $-\frac{5}{2}$ & $-\frac{3}{2}$ & $-1$ & $-\frac{1}{2}$ & $-\frac{1}{6}$ & $0$ \\ \hline
				$1$ & \notableentry & \textbf{Case II} & \textbf{Case II} & \textbf{Case II} & \textbf{Case III} & \notableentry & \notableentry \\ \hline
				$2$ & \notableentry & \notableentry & \notableentry & \textbf{Case II} & \textbf{Case II} & \notableentry & \notableentry \\ \hline
				$3$ & \notableentry & \notableentry & \notableentry & \notableentry & \textbf{Case III} & \notableentry & \notableentry \\ \hline
			\end{tabular}
			\label{Table_UIS_proof_for_P_4}
		\end{table}
		
	\end{itemize}

	Also, we note that when $I_P\bk{\chi,s}$ admits a unique irreducible subrepresentation, $\tau$, then it is the unique subquotient of $I_P\bk{\chi,s}$ such that $r_T^G\tau$ contains an anti-dominant exponent.
	It follows that $\tau$ appears in $I_P\bk{\chi,s}$ with multiplicity $1$.

\vspace{0.3cm}

It remains to deal with the cases $\coset{4,-\frac{1}{2},1}$ and $\coset{4,-\frac{1}{2},3}$.
Both calculations are of the form suggested in \Cref{Subsec:Subs} as \textbf{Case III}. However, for $\coset{4,-\frac{1}{2},1}$ we show that $\pi=I_P\bk{\chi,-\frac{1}{2}}$ admits a unique irreducible subrepresentation while for $\coset{4,-\frac{1}{2},3}$ we show that the maximal semi-simple subrepresentation is of length 3.

$\bullet$ Consider $\pi=I_P\bk{\chi,-\frac{1}{2}}$, where $P=P_4$ and $\chi=\Id$.
Let $M'=M_6$ and $M''=M\cap M'=M_{1,2,3,5}$.
Furthermore, let $\Omega''=r_{M''}^M\Omega$, where $\Omega=\Omega_{M,\chi,s}$, and note that
\[
\lambda_0 = r_T^M\Omega = r_T^{M''}\Omega'' = 
\esixchar{-1}{-1}{-1}{2}{-1}{-1} .
\]
As explained in \Cref{Subsec:Subs},
\[
i_M^G\Omega \hookrightarrow i_{M'}^G \bk{ i_{M''}^{M'}\Omega''}.
\]
We now note that $i_{M''}^{M'}\Omega''$ is an irreducible representation of $M'$.
This follows from \cite[Theorem 5.3]{Ban2003}.
Alternatively, one can show, using \Cref{Subsec:Subs}, that $i_{M''}^{M'}\Omega''$ admits a unique irreducible subrepresentation.
On the other hand $i_{M''}^{M'}\Omega'' = \bk{i_{M''}^{M'}\Id}\otimes\Omega''$ and $i_{M''}^{M'}\Id$ is unitary.
It follows that $i_{M''}^{M'}\Omega''$ is semi-simple of length $1$ and hence it is irreducible.

We then note that $\lambda_{a.d.}\leq r_T^{M'} \bk{i_{M''}^{M'} \Omega''}$ and hence, by \Cref{Central_character_argument},
\[
i_{M''}^{M'} \Omega'' \hookrightarrow i_T^{M'} \lambda_{a.d.} .
\]
It follows that
\[
i_M^G\Omega \hookrightarrow i_{M'}^G \bk{i_T^{M'} \lambda_{a.d.}} \cong i_T^G \lambda_{a.d.}.
\]
Since $i_T^G \lambda_{a.d.}$ admits a unique irreducible subrepresentation, then so does $i_M^G\Omega$.

$\bullet$ Consider $\pi=I_P\bk{\chi,-\frac{1}{2}}$, where $P=P_4$ and $\chi$ is of order $3$.
We show that the maximal semi-simple subrepresentation of $\pi$ has length $3$.
In particular, this proves that $\pi$ is reducible.

The leading exponent in this case is
\[
\lambda_0 = \esixchar{-1}{-1}{-1}{2}{-1}{-1} + \chi\circ \esixchar{0}{0}{0}{1}{0}{0},
\]
while
\[
\lambda_{a.d.} = \esixchar{0}{0}{0}{-1}{0}{0} +\chi\circ \esixchar{1}{1}{1}{1}{1}{1}
\]
is an anti-dominant exponent in $r_T^G\pi$.

We note that $\lambda_{a.d.} = w_{42354}\cdot\lambda_0$ and that $N\bk{w_{42354},\lambda_0}$ is an isomorphism (as a composition of $5$ isomorphisms associated with simple reflections).
It follows that
\[
I_P\bk{\chi,-\frac{1}{2}} \hookrightarrow i_T^G \lambda_{a.d.} .
\]
One checks that
\[
\Stab_{W}\bk{\lambda_{a.d.}} = \gen{w_{3165}} \cong Z\rmod3\Z.
\]
Following the analysis in \cite{MR620252,MR1141803} we have
\[
i_T^{M_4}\lambda_{a.d.} = \bigoplus_{\xi\in\Z\rmod3\Z} \sigma_\xi .
\]
Since $i_{M_4}^G\sigma_\xi$ is a standard module for any $\xi\in\Z\rmod3\Z$, it admits a unique irreducible subrepresentation $\pi_\xi$.

So $\bigoplus_{\xi\in\Z\rmod3\Z} \pi_\xi$ is the maximal semi-simple subrepresentation of $i_T^G \lambda_{a.d.}$.
We prove that it is a subrepresentation of $\pi$.

Note that
\[
mult_{r_T^Gi_T^G\lambda_{a.d.}}\bk{\lambda_{a.d.}} = 3
\]
and
\[
mult_{r_T^G\pi_\xi}\bk{\lambda_{a.d.}} = 1
\]
for any $\xi\in\Z\rmod3\Z$.
Namely, these are the only subquotients of $i_T^G\lambda_{a.d.}$ containing the exponent $\lambda_{a.d.}$.
Since $mult_{r_T^G\pi}\bk{\lambda_{a.d.}} = 3$, $\bigoplus_{\xi\in\Z\rmod3\Z} \pi_\xi$ is the maximal semi-simple subrepresentation of $\pi$.

\end{proof}

\appendix
\appendixpage

\section{Branching Rules Coming from Small Levi Subgroups}
\label{App:Branching_Rules}

In this section, we describe the branching rules (see \Cref{Subsec:Irr_Test}) used in \Cref{Sec_degenerate_principal_series_E6}.
A branching rule stemming from a Levi subgroups $M$, whose derived subgroups $M^{der}$ is of type $X_n$ will be referred to as an $X_n$-branching rules.
We will only record the relevant results on the representation theory of $M$ and the branching rules following from it.
For further discussion on the representations of $GL_n$ for small $n$, see \cite{SegalRepresentations_of_GLn}.

We note that since the representations $I_P\bk{\chi,s}$ have cuspidal support on $T$, then the same holds for all the constituents of $r_M^G I_P\bk{\chi,s}$ for any standard Levi subgroup $M$ of $G$.
Namely, $r_M^G I_P\bk{\chi,s}$ does not contain supercuspidals.

\subsection{"Golden Rule" - Induction from the Trivial Character of the Torus}

Let $\pi$ be an irreducible representation of $G$, $\lambda\leq r_T^G \pi$ and let
\[
\Theta_\lambda = \set{\alpha\in\Delta \mvert \gen{\lambda,\check{\alpha}}=0} \subset\Delta.
\]
We note that $\lambda \perp \gen{\alpha\mvert \alpha\in{\Theta_\lambda}}$
and hence
\[
i_T^{M_{\Theta_\lambda}} \lambda = \bk{i_T^{M_{\Theta_\lambda}} \Id} \otimes \lambda .
\]
We also note that $i_T^{M_{\Theta_\lambda}} \Id$ is irreducible.
On the other hand, $\coset{r_T^{M_{\Theta_\lambda}} \bk{i_T^{M_{\Theta_\lambda}} \lambda}} = \Card{W_{M_{\Theta_\lambda}}} \times \coset{\lambda}$, and hence $i_T^{M_{\Theta_\lambda}} \lambda$ is the unique irreducible representation of $M_{\Theta_\lambda}$ such that $\lambda\leq r_T^{M_{\Theta_\lambda}}\pi$.
We conclude that
\begin{framed}
\begin{equation}
\label{Eq:Golden_rule}
\lambda \leq r_T^G\pi \Longrightarrow \Card{W_{M_{\Theta_\lambda}}} \times \coset{\lambda} \leq \coset{r_T^G\pi} .
\end{equation}
\end{framed}

\subsection{$A_1$-Branching Rules}

Let $M=M_{\set{\alpha}}$ be a Levi subgroup of $G$ whose derived subgroup $M^{der}$ is isomorphic to $SL_2$.
It follows that $i_T^M\lambda$ is irreducible if $\gen{\lambda,\check{\alpha}}\neq\pm 1$.
We then have the following $A_1$-branching rule:
\begin{framed}
\begin{equation}
\lambda\leq r_T^G\pi,\ \gen{\lambda,\check{\alpha}} \neq \pm 1 \Longrightarrow \coset{\lambda}+\coset{s_\alpha\cdot\lambda} \leq \coset{r_T^G\pi} .
\end{equation}
\end{framed}

\subsection{$A_2$-Branching Rules}
\label{Subsec:Branching_rule_Levi_of_type_A2}

Let $M=M_{\set{\alpha,\beta}}$ be a Levi subgroup of $G$ whose derived subgroup $M^{der}$ is isomorphic to $SL_3$.
Assume that $\gen{\lambda,\check{\alpha}}=\pm 1$ and $\gen{\lambda,\check{\beta}}=0$.
In such a case, $i_T^M\lambda$ has length two.
We write $i_T^M\lambda = \sigma_1 + \sigma_2$.
Up to conjugating induces, we have
\begin{align*}
& \coset{r_T^G\sigma_1} = \coset{\lambda} + \coset{s_\beta\cdot\lambda} + \coset{s_\alpha s_\beta\cdot\lambda} = 2\times\coset{\lambda} + \coset{s_\alpha\cdot\lambda} \\
& \coset{r_T^G\sigma_2} = \coset{s_\alpha\cdot\lambda} + \coset{s_\beta s_\alpha\cdot\lambda} + \coset{s_\alpha s_\beta s_\alpha\cdot\lambda} = 2\times \coset{s_\beta s_\alpha\lambda} + \coset{s_\alpha\cdot\lambda}
\end{align*}

We then have the following $A_2$-branching rule:
\begin{framed}
	\begin{equation}
	\lambda\leq r_T^G\pi,\ \gen{\lambda,\check{\alpha}} = \pm 1,\  \gen{\lambda,\check{\beta}} = 0 \Longrightarrow 2\times\coset{\lambda}+\coset{s_\alpha\cdot\lambda} \leq \coset{r_T^G\pi} .
	\end{equation}
\end{framed}

\subsection{$A_3$-Branching Rules}

Let $M=M_{\set{\alpha,\beta,\gamma}}$ be a Levi subgroup of $G$ whose derived subgroup $M^{der}$ is isomorphic to $SL_4$.
Assume that $\beta$ is a neighbor of both $\alpha$ and $\gamma$ in $Dyn\bk{G}$, the Dynkin diagram of $G$.
Further assume that $\gen{\lambda,\check{\alpha}}=1$, $\gen{\lambda,\check{\beta}}=0$ and $\gen{\lambda,\check{\gamma}}=-1$.
There exists a unique irreducible representation $\sigma$ of $M$ such that $\lambda\leq r_T^G\sigma$.
Furthermore, it holds that
\[
\coset{r_T^G\sigma} = 2\times\coset{\lambda} + \coset{s_\alpha\cdot\lambda} + \coset{s_\gamma\cdot\lambda} + 2\times \coset{s_\alpha s_\gamma\cdot\lambda} .
\]
We conclude the following $A_3$-branching rule:
\begin{framed}
	\begin{equation}
	\begin{array}{c}
	\lambda\leq r_T^G\pi,\ 
	\gen{\lambda,\check{\alpha}}=1,\ \gen{\lambda,\check{\beta}}=0,\ \gen{\lambda,\check{\gamma}}=-1 \\
	\Longrightarrow 2\times\coset{\lambda} + \coset{s_\alpha\cdot\lambda} + \coset{s_\gamma\cdot\lambda} + 2\times \coset{s_\alpha s_\gamma\cdot\lambda} \leq \coset{r_T^G\pi} .
	\end{array}
	\end{equation}
\end{framed}

\subsection{$D_4$-Branching Rules}

Let $M=M_{\set{\alpha,\beta,\gamma,\delta}}$ be a Levi subgroup of $G$ whose derived subgroup $M^{der}$ is isomorphic to $Spin_8$ or $SO_8$.
The representation theory of these groups was studied in \cite{MR2017065} and \cite{Gan-Savin-D4}.

Assume that $\beta$ is a neighbor of both $\alpha$, $\gamma$ and $\delta$ in $Dyn\bk{G}$ and let $w_0=s_{\beta}s_{\alpha}s_{\gamma}s_{\delta}s_{\beta}$.

Further assume that $\gen{\lambda,\check{\beta}}=-1$ and $\gen{\lambda,\check{\alpha}}=\gen{\lambda,\check{\gamma}}=\gen{\lambda,\check{\delta}}=0$.
There exists a unique irreducible representation $\sigma$ of $M$ such that $\lambda\leq r_T^G\sigma$.
Furthermore, it holds that
\[
\coset{r_T^G\sigma} = 
\suml_{w\in W_M^{M_{\set{\beta}},T} \setminus \set{1,s_\beta} }
\coset{w\cdot\bk{w_0\cdot\lambda}}
.
\]
We conclude the following $D_4$-branching rule:
\begin{framed}
	\begin{equation}
	\begin{array}{c}
	\lambda\leq r_T^G\pi,\ 
	\gen{\lambda,\check{\beta}}=-1,\ \gen{\lambda,\check{\alpha}}=\gen{\lambda,\check{\gamma}}=\gen{\lambda,\check{\delta}}=0 \\
	\Longrightarrow 
	\suml_{w\in W_M^{M_{\set{\beta}},T} \setminus \set{1,s_\beta} }
	\coset{w\cdot\bk{w_0\cdot\lambda}} \leq \coset{r_T^G\sigma} .
	\end{array}
	\end{equation}
\end{framed}


\section{Example of Application of Branching Rules to Prove the Irreducibility of $I_P\bk{\chi,s}$}
\label{App:Example_of_Irr_Cir}

In this section, we consider a detailed example of the application of branching rules in order to prove the irreducibility of $I_{P_1}\bk{\Id,-2}$.

We start by writing the initial exponent of $I_{P_1}\bk{\Id,-2}$:
\[
\lambda_0 = \esixcharclass{3}{-1}{-1}{-1}{-1}{-1} .
\]
The anti-dominant exponent in the $W$-orbit of $\lambda_0$ is
\[
\lambda_{a.d.} = \esixcharclass{-1}{0}{-1}{-1}{0}{-1} .
\]
Let $\pi$ be an irreducible representation of $G$ such that $\pi\leq I_{P_1}\bk{\Id,-2}$ and $\lambda_{a.d.}\leq r_T^G\pi$.
Note that $\dim_\C r_T^G I_{P_1}\bk{\Id,-2}=27$ and hence $\dim_\C r_T^G\pi\leq 27$.
In particular, if we show that $\dim_\C r_T^G\pi=27$, then it would follow that $\pi=I_{P_1}\bk{\Id,-2}$ is irreducible.

We apply branching rules as follows:
\begin{itemize}
	\item 
	Since $\Card{\Stab_W\bk{\lambda_{a.d.}}} = 4$, it follows from \Cref{Eq:Golden_rule} that $4\times\lambda_{a.d.} \leq r_T^G \pi$.
	
	\item Applying the $A_2$-branching rules on $\lambda_{a.d.}$ with respect to the Levi subgroups $M_{\set{\alpha_2,\alpha_4}}$ (or $M_{\set{\alpha_4,\alpha_5}}$) and $M_{\set{\alpha_5,\alpha_6}}$ yields:
	\[
	2\times \esixcharclass{-1}{-1}{-2}{1}{-1}{-1} + 2\times \esixcharclass{-1}{0}{-1}{-1}{-1}{1} \leq r_T^G\pi .
	\]
	
	\item
	Applying, on $\esixcharclass{-1}{-1}{-2}{1}{-1}{-1}$, the $A_1$-branching rule associated to $M_{\set{\alpha_3}}$ yields
	\[
	2\times \esixcharclass{-3}{-1}{2}{-1}{-1}{-1} \leq r_T^G\pi .
	\]
	
	\item 
	Applying the $A_1$-branching rule associated to $M_{\set{\alpha_1}}$ on this exponent yields
	\[
	2\times \esixcharclass{3}{-1}{-1}{-1}{-1}{-1} \leq r_T^G\pi .
	\]
	
	\item Applying on $\esixcharclass{-1}{0}{-1}{-1}{-1}{1}$ the $A_2$-branching rule associated to $M_{\set{\alpha_2,\alpha_4}}$ yields
	\[
	\esixcharclass{-1}{-1}{-2}{1}{-2}{1} \leq r_T^G \pi .
	\]
	
	
	\item 	We now apply a sequence of $A_1$-branching rules associated with entries which are not $0$ or $\pm 1$ on $\esixcharclass{-1}{-1}{-2}{1}{-2}{1}$ and subsequent exponents.
	This yields
	
	\begin{align*}
	&\esixcharclass{-3}{-1}{2}{-1}{-2}{1}
	+ \esixcharclass{-1}{-1}{-2}{-1}{2}{-1} \\
	&+\esixcharclass{-3}{-1}{2}{-3}{2}{-1}
	+ \esixcharclass{3}{-1}{-1}{-1}{-2}{1} \\
	&+\esixcharclass{3}{-1}{-1}{-3}{2}{-1} 
	+ \esixcharclass{3}{-4}{-4}{3}{-1}{-1} \\
	&+\esixcharclass{-3}{-4}{-1}{3}{-1}{-1} 
	+ \esixcharclass{-1}{-4}{4}{-1}{-1}{-1} \\
	&+\esixcharclass{3}{4}{-4}{-1}{-1}{-1}
	+ \esixcharclass{-1}{4}{4}{-5}{-1}{-1} \\
	&+\esixcharclass{-3}{4}{-1}{-1}{-1}{-1}
	+ \esixcharclass{-1}{-1}{-1}{5}{-6}{-1} \\
	&+\esixcharclass{-1}{-1}{-1}{-1}{6}{-7} 
	+ \esixcharclass{-1}{-1}{-1}{-1}{-1}{7} \leq r_T^G\pi
	\end{align*}

\end{itemize}
We conclude that
\begin{align*}
& 4\times \lambda_{a.d.} + 2\times \esixcharclass{-1}{-1}{-2}{1}{-1}{-1} + 2\times \esixcharclass{-1}{0}{-1}{-1}{-1}{1} \\
&+2\times \esixcharclass{-3}{-1}{2}{-1}{-1}{-1} 
+2\times \esixcharclass{3}{-1}{-1}{-1}{-1}{-1} \\
&+\esixcharclass{-1}{-1}{-2}{1}{-2}{1} + \esixcharclass{-3}{-1}{2}{-1}{-2}{1} \\
&+\esixcharclass{-1}{-1}{-2}{-1}{2}{-1}
+ \esixcharclass{-3}{-1}{2}{-3}{2}{-1} \\
&+\esixcharclass{3}{-1}{-1}{-1}{-2}{1}
+ \esixcharclass{3}{-1}{-1}{-3}{2}{-1} \\
&+\esixcharclass{3}{-4}{-4}{3}{-1}{-1} 
+ \esixcharclass{-3}{-4}{-1}{3}{-1}{-1} \\
&+\esixcharclass{-1}{-4}{4}{-1}{-1}{-1}
+ \esixcharclass{3}{4}{-4}{-1}{-1}{-1} \\
&+\esixcharclass{-1}{4}{4}{-5}{-1}{-1}
+ \esixcharclass{-3}{4}{-1}{-1}{-1}{-1} \\
&+\esixcharclass{-1}{-1}{-1}{5}{-6}{-1}
+ \esixcharclass{-1}{-1}{-1}{-1}{6}{-7} \\
&+\esixcharclass{-1}{-1}{-1}{-1}{-1}{7} \leq r_T^G\pi
\end{align*}
Since we have proven that $27\leq \dim_\C r_T^G\pi$, it follows that $\pi=I_{P_1}\bk{\Id,-2}$ is irreducible.

\todonum{Maybe add more examples? Maybe find a simple example where one needs to "round-up" the multiplicity (say, from 3 to 4 so it will be even)?}


\bibliographystyle{plain}
\bibliography{bib}


\end{document}